\newtheorem{theorem}{Theorem}[section]
\newtheorem{corollary}[theorem]{Corollary}
\newtheorem{lemma}[theorem]{Lemma}
\theoremstyle{definition}
\newtheorem{remark}[theorem]{Remark}
\numberwithin{equation}{section}
\renewcommand{\div}{\mbox{\rm div}\,}
\newcommand{\R}{{\mathbb R}}
\title[Global $L^\infty$-estimate in arbitrary domains]
{Global $L^\infty$-estimate for general quasilinear elliptic equations in arbitrary domains of $\mathbb{R}^N$}
\author[S. Carl]{Siegfried Carl}
\address[S. Carl]{Institut f\"ur Mathematik,  Martin-Luther-Universit\"at Halle-Wittenberg,
D-06099 Halle, Germany}
\email{\tt siegfried.carl@mathematik.uni-halle.de}
\author[H. Tehrani]{Hossein Tehrani}
\address[H. Tehrani]{Department of Mathematical Sciences,  University of Nevada Las Vegas Box 454020, USA}
\email{\tt tehranih@unlv.nevada.edu}
\keywords{Quasilinear Elliptic Equation, Beppo-Levi Space, Global $L^\infty$-Estimate, Wolff Potential, Decay property}
\subjclass[2010]{35B40, 35B45, 35J62}
\date{\today}
\begin{document}

\begin{abstract}
In this paper our main goal is to present a new global $L^\infty$-estimate for a general class of quasilinear elliptic equations of the form
$$
-\div \mathcal{A}(x,u,\nabla u)=\mathcal{B}(x,u,\nabla u)
$$
under minimal structure conditions on the functions $\mathcal{A}$ and $\mathcal{B}$, and in arbitrary domains of $\R^N$. The main focus and the novelty of the paper is to prove $L^\infty$-estimate of the form
$$
|u|_{\infty, \Omega}\le C \Phi(|u|_{\beta,\Omega})
$$
where the constant $C$ encodes  the contribution of the data, and $\Phi: \R^+\to \R^+$ is a data independent, continuous, and nondecreasing function with $\lim_{s\to 0^+}\Phi(s)=0$.
\end{abstract}

\maketitle


 \section{Introduction}\label{S1}

In this note our goal is to present a new $L^{\infty}$-estimate for solutions to a general class of quasilinear elliptic equations
in divergence form:
\begin{equation*}
\label{(P1)}
(P1)\quad\quad\,  -\div \mathcal{A}(x,u,\nabla u)=\mathcal{B}(x,u, \nabla u)
\end{equation*}
where $\mathcal{A}: \Omega\times \mathbb{R} \times \mathbb{R}^N\rightarrow \mathbb{R}^N$, and
 $\mathcal{B}:\Omega\times \mathbb{R} \times \mathbb{R}^N\rightarrow \mathbb{R}$ are given Carath\'eodory functions with
$\mathcal{A}(\cdot , \zeta, \xi),\, \mathcal{B}(\cdot , \zeta, \xi)$ Lebesgue measurable, and $\mathcal{A}(x, \cdot , \cdot),\, \mathcal{B}(x, \cdot , \cdot)$ are  assumed to be continuous. Also throughout we assume the following structural conditions:
\begin{eqnarray*}
(s1)\qquad\quad \, \mathcal{A}(x , \zeta, \xi)\xi & \geq & \lambda |\xi|^p- a_1(x)|\zeta|^p - a_2(x) \\
(s2)\qquad\quad \, |\mathcal{B}(x , \zeta, \xi)| & \leq & b_1(x) |\xi|^{p-1} +  b_2 (x)|\zeta|^{p-1}+ b_3(x)
\end{eqnarray*}
where $1< p< N\, $, $\lambda $ a fixed positive constant and the remaining nonnegative functions $a_i$ and $b_j$ in (s1)--(s2) are assumed to be in suitable integral spaces specified below. We consider weak $ D^{1,p}(\Omega)$ solutions of $(P1)$ where $\Omega$ is an arbitrary (not necessarily bounded) domain of $\mathbb{R}^N$, and $ X:= D^{1,p}(\Omega)$  is  the Beppo-Levi space (homogeneous Sobolev space) which is the completion of $C_c^{\infty}(\Omega)$ under the norm $\|u\|_X=(\int_{\Omega} |\nabla u|^p)^{1/p}$. Hence,  $u\in D^{1,p}(\Omega)$ is a solution of $(P1)$ if the following holds:
\begin{equation}\label{1.1}
\int_{\Omega} \mathcal{A}(x,u,\nabla u)\nabla \phi\, dx =\int_{\Omega}\mathcal{B}(x,u,\nabla u)\phi \, dx  \quad\forall \phi\in D^{1,p}(\Omega).
\end{equation}
To motivate our result, let $\Omega=\R^N$. Apparently $X= D^{1,p}(\R^N)\subset W_{loc}^{1,p}(\R^N)$. We note that by piecing together the classical local $L^{\infty}$-estimates for $W_{loc}^{1,p}(\R^N) $ solutions of $(P1)$, namely;
\begin{equation}\label{1.2}
|u|_{\infty, B(x_0,r)} \leq C( |u|_{p^{*}, B(x_0,2r)}+K(r))
\end{equation}
one can obtain the following global bound for a  $D^{1,p}(\mathbb{R}^N)$ solution $u$ of $(P1)$ in $\mathbb{R}^N$:
\begin{equation}\label{1.21}
 |u|_{\infty, \mathbb{R}^N} \leq C( |u|_{p^*, \mathbb{R}^N}+K)
\end{equation}
where $p^*=\frac{Np}{N-p}$ is the critical Sobolev exponent and the constants $C$, and in particular, $K$ depend on suitable integral norms of the structural data, i.e., the functions $a_1 $ through $b_3 $ (see \cite{Serrin}). In general such global $L^{\infty}$-estimates provide useful bounds for solutions of $(P1)$, which play a prominent role and are the basis for investigating  qualitative properties of solutions such as regularity  (see e.g. \cite{DiB83, Lie88, Tolksdorf}) as well as Harnack type inequalities for solutions, see \cite{Tru67}. On the other hand in a recent work on some quasilinear elliptic equations in the whole space $\mathbb{R}^N$, we were naturally led to consider a control on the $L^{\infty}$ norm that depends only on $L^{p^*} $ norm of the solution itself so that, in particular, the smallness of integral norm will directly result in pointwise smallness of the solution. As one can easily observe, the existence of the data dependent term $K$ on the right hand side of (\ref{1.21}), renders this global estimate of limited use in this context. Therefore, we are interested in obtaining  $L^{\infty}$-estimates for solutions of $(P1)$ of the form:
\begin{equation}\label{goal}
|u|_{\infty, \Omega} \leq C \Phi (|u|_{\beta, \Omega})
\end{equation}
where $\Phi: \mathbb{R}^+\to \mathbb{R}^+$ is a {\it data independent} {\bf continuous, nondecreasing  function}  satisfying $\lim_{s\rightarrow 0^+}\Phi(s)=0$, and where  the constant $C$ encodes the contribution of the data functions $a_1 $ through $b_3 $ in the structure conditions (s1)--(s2), which are of different integrability.
In particular, the above estimates (\ref{goal}) holds true for $\beta=p^*$ such that (\ref{goal}) implies
\begin{equation}\label{goal-1}
|u|_{\infty, \Omega}\le C \Phi(\|u\|_X),
\end{equation}
which yields, in particular, $|u|_{\infty}\to 0$ as $\|u\|_X\to 0$.
With this result, one can simplify and replace some of the ad hoc arguments that were
previously used to control $L^\infty$-bounds of solutions in some previous works (see e.g. \cite{CCT-JDE,CT-JEPE}).

Although we have been unable to locate estimates of this type  and generality in the literature (not even for second order linear elliptic equations that corresponds to the case $p=2$ in our presentation here), we have nevertheless observed that the standard Moser iteration method that is generally employed in the proof of the classical local boundedness result (\ref{1.2}), can also be used in such a way as to result in a global estimate of the type that we proposed above. Therefore, the main goal and the novelty of this paper is to state and prove such a global   $L^{\infty}$-estimate of the form (\ref{goal}) for solutions of $(P1)$ in an {\bf arbitrary domain} $\Omega\subset \mathbb{R}^N$. In order to obtain our results, we make use of Moser's iteration method starting with an appropriately designed  truncated test function that is different from what is usually used in boundedness estimates based on  Nash-Moser theory. As for the Moser iteration technique in its standard form for some special elliptic equation in bounded domains we refer to e.g. \cite[Chap. 3]{Drabek-1997}.

Since the data functions $a_1 $ through $b_3 $ in the structure conditions (s1)--(s2) are assumed to belong to certain Lebesgue spaces, we will see that the following structure conditions are special cases of (s1)--(s2):
\begin{eqnarray*}
(\hat s1)\qquad\quad \, \mathcal{A}(x , \zeta, \xi)\xi & \geq & \lambda |\xi|^p- a_1 |\zeta|^{r} - a_2(x) \\
(\hat s2)\qquad\quad \, |\mathcal{B}(x , \zeta, \xi)| & \leq & b_1(x) |\xi|^{p-1} +  b_2 |\zeta|^{r-1}+ b_3(x),
\end{eqnarray*}
where $a_1$ and $b_2$ are positive constants, and $p< r < p^*$. In a number of a-priori boundedness results for quasilinear elliptic equations in bounded as well as unbounded domains the structural  data functions $a_1,\ b_1, b_2$ are assumed to be positive constants, and a-priori $L^\infty$ results are obtained under various additional assumptions on $\mathcal{A}$ and $\mathcal{B}$ (see e.g. \cite{Byun-Pala2013, Puel2004, Ma-Wi2019}), and for nonconstant data functions, see e.g. \cite{Liang-Wu1997}. Moreover, in those works, usually the $L^\infty$-bound is given by a constant $C$ that encodes the contribution of the data functions or is given in the form (\ref{1.21}).  The $L^\infty$-boundedness estimate (\ref{goal}) is different from those works  in that it only requires the structure conditions (s1)--(s2), which already allows us to control the $L^\infty$-bound by the norm of the underlying solution space $X$.

Our main result will be presented in Section \ref{S2} along with further implications that are derived from the main theorem. In Section \ref{S3} we study regularity, decay, and existence results of solutions for a quasilinear elliptic equation in $\R^N$ based on our main result.


\section{Main Result}\label{S2}

Before we present our result, first a few words on the notation. For an open set $V\subset \mathbb{R}^N$, the standard norms of the Lebesgue spaces  $L^r(V)$ are denoted by $|\cdot |_{r, V}$,  or whenever it is convenient and not confusing, by $|\cdot|_r$. An open ball with radius $r$ is denoted by $ B_r$, or $B(x_0,r)$ in case we want to emphasize the center, and $\chi_E$ denotes the characteristic function of the Lebesgue measurable set $E\subset \R^N$. Finally we use $C$, to denote a constant whose exact value is immaterial and may change from line to line. To indicate the dependence of the constant on the data, we write $C=C(a,b,\cdot,\cdot,\cdot)$ with the understanding that this dependence is increasing in its variables.
We are now ready to state our main result.
\begin{theorem}\label{T1}
Suppose $u\in D^{1,p}(\Omega)$ is a weak solution of equation $(P1)$:
$$
 -\div \mathcal{A}(x,u,\nabla u)=\mathcal{B}(x,u, \nabla u)
$$
where $\mathcal{A}$ and $\mathcal{B}$ satisfy the structural conditions $(s1)-(s2)$, with $ b_1\in L^{\infty}(\Omega)\, and $, $a_1\,, b_2\in L^{\frac{q}{p}}(\Omega)$ for some $q>N$. Furthermore assume $u\in  L^{\beta}(\Omega)$
for some $\beta >p$ and
$$
a_2\in  L^{\frac{\beta}{p}}(\Omega) \cap L^{\infty}(\Omega),\quad \quad  b_3\in  L^{\frac{\beta}{p-1}}(\Omega) \cap L^{\infty}(\Omega).
$$
Then there exists $\theta_0=\theta ( p,\beta,N), $ with $0< \theta_0 \leq 1$, such that
 \begin{equation}\label{1.3}
|u|_{\infty, \Omega} \leq C \max \{ |u|_{\beta, \Omega}, |u|^{\theta_0}_{\beta, \Omega}\}
\end{equation}
where $C=C\big( p,q,\beta,N,  |b_1|_{\infty,\Omega}, |a_1|_{\frac{q}{p}, \Omega}, |b_2|_{\frac{q}{p}, \Omega},||a_2||,||b_3||\big)$, with
$$||a_2||:=|a_2|_{\frac{\beta}{p}, \Omega}+|a_2|_{\infty,\Omega},\quad\quad ||b_3||:=|b_3|_{\frac{\beta}{p-1},\Omega}+|b_3|_{\infty,\Omega}.
$$
\end{theorem}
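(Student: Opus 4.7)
The plan is to run Moser iteration globally on $\Omega$. Since $\Omega$ is arbitrary and possibly unbounded, the standard technique of multiplying by a compactly supported cutoff is unavailable; following the paper's hint, I build the truncation directly into the test function. For an iteration exponent $s\ge 1$ and truncation height $M>0$, I take
$$\phi = u\,T_M(|u|)^{s-1},\qquad T_M(t)=\min(t,M).$$
Then $\phi\in L^\infty(\Omega)$ with $\nabla\phi$ controlled by $|\nabla u|$, and because the Sobolev embedding $D^{1,p}(\Omega)\hookrightarrow L^{p^*}(\Omega)$ forces $u$ to decay at infinity, $\phi\in D^{1,p}(\Omega)$ after a standard approximation argument, so it is admissible in (\ref{1.1}).

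Inserting $\phi$ into (\ref{1.1}) and using (s1)--(s2), I absorb the $b_1|\nabla u|^{p-1}$ term into the left-hand side via Young's inequality (since $b_1\in L^\infty$), then apply the global Sobolev inequality $|v|_{p^*}\le S\|\nabla v\|_p$ to $v:=T_M(|u|)^{(s+p-1)/p}\,\mathrm{sign}(u)$. This yields a Caccioppoli-type iteration inequality of the schematic form
$$\bigl|T_M(|u|)\bigr|_{p^*(s+p-1)/p}^{\,s+p-1}\le C(s)\!\int_\Omega\!\bigl(a_1|u|^p+a_2+b_2|u|^{p-1}+b_3\bigr)T_M(|u|)^{s-1}\,dx.$$
The condition $q>N$ on $a_1,b_2\in L^{q/p}$ is exactly the standard Moser slack: via H\"older it lets one bound the $a_1,b_2$-contributions by a power of $|u|$ in some intermediate $L^{\alpha}$-norm with $\alpha<p^*(s+p-1)/p$. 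The crux is that the assumed double integrability $a_2\in L^{\beta/p}\cap L^\infty$ and $b_3\in L^{\beta/(p-1)}\cap L^\infty$ is tailored precisely so that the otherwise inhomogeneous $a_2$- and $b_3$-contributions can be written, via H\"older interpolation, as $\|a_2\|$ (respectively $\|b_3\|$) times a positive power of $|u|_\beta$ rather than a bare constant.

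With the iteration inequality in hand, I set $\alpha_0=\beta$ and $\alpha_{n+1}=(p^*/p)\alpha_n$, so that $\alpha_n=\beta\,(p^*/p)^n\to\infty$. At each step I send $M\to\infty$ and iterate; because $\alpha_n$ grows geometrically, the accumulated constants $\prod_n C(s_n)^{1/\alpha_n}$ remain finite, and passing $n\to\infty$ produces the bound $|u|_{\infty,\Omega}\le C\max\{|u|_{\beta,\Omega},|u|_{\beta,\Omega}^{\theta_0}\}$ for some $\theta_0=\theta_0(p,\beta,N)\in(0,1]$, as claimed.

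\textbf{Main obstacle.} The principal difficulty is ensuring that the lower-order terms involving $a_2,b_3$ enter \emph{multiplicatively} in $|u|_\beta$ rather than as an additive constant (as in the naive global bound (\ref{1.21})). This requires combining the $L^{\beta/p}$ (respectively $L^{\beta/(p-1)}$) integrability of $a_2$ (respectively $b_3$) with H\"older interpolation so as to produce a factor $|u|_\beta^{\alpha_n}$ at every iteration level, and then carefully tracking the resulting exponents through the infinite iteration to identify the final exponent $\theta_0$; this telescoping bookkeeping, rather than the iteration itself, is the technical heart of the argument. A secondary but nontrivial point is verifying that $\phi\in D^{1,p}(\Omega)$, which rests on the decay of $u$ furnished by the Sobolev embedding $u\in L^{p^*}(\Omega)$.
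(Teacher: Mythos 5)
Your proposal follows essentially the same route as the paper: the truncated test function $u\,T_M(|u|)^{s-1}$ is the paper's $\phi=u\min\{|u|^{\alpha-1},L^p\}$ up to reparametrization, the auxiliary power $T_M(|u|)^{(s+p-1)/p}\mathrm{sign}(u)$ plays the role of the paper's $\psi$, the $b_1$-term is absorbed by Young, the $a_1,b_2$-terms by H\"older with the $q>N$ slack, the $a_2,b_3$-terms are converted into powers of $|u|_\beta$ via their $L^{\beta/p}\cap L^\infty$ and $L^{\beta/(p-1)}\cap L^\infty$ integrability, and the geometric iteration $\alpha_{n+1}=(p^*/p)\alpha_n$ with the convergent product of exponents yielding $\theta_0$ is exactly the paper's scheme. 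The argument is correct and matches the paper's proof in all essential respects.
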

\begin{proof} As was mentioned above, the proof is based on Moser's iteration procedure and therefore the start of the proof
follows along typical steps to obtain a reverse H\"older inequality for the solution through standard estimates on properly chosen test functions. The point of departure in our proof is in the {\it nonlinear} nature of the reverse H\"older inequality that we obtain and the ensuing nontraditional iteration procedure.  Here as the details.

We consider the function $\phi=u\min{\{|u|^{\alpha-1}, L^p}\}$ for $\alpha =\beta-(p-1) >1$ and  $L >1$.
Clearly $\phi\in D^{1,p}(\Omega) $ is an admissible test function to be used in (\ref{1.1}) and
$$\nabla \phi =L^p \nabla u \chi_{\{|u|^{\alpha -1}> L^p\}} + \alpha |u|^{\alpha -1} \nabla u \chi_{\{|u|^{\alpha -1}\leq L^p\}}.
$$
Using the structure condition $(s1)$, and assuming $\lambda =1$ for simplicity, we have
\begin{eqnarray*}
\int_{\Omega}\mathcal{A}(x,u,\nabla u)\nabla \phi &\geq & \int_{\Omega}L^p|\nabla u|^p\chi_{\{|u|^{\alpha -1}> L^p\}}+
\alpha \int_{\Omega} |u|^{\alpha-1}|\nabla u|^p\chi_{\{|u|^{\alpha -1}\leq L^p\}}\\
& & - \int_{\Omega}a_1(x) L^p|u|^p\chi_{\{|u|^{\alpha -1}> L^p\}}-
\alpha \int_{\Omega}a_1(x) |u|^{\alpha +p-1}\chi_{\{|u|^{\alpha -1}\leq L^p\}} \\
& &   - \int_{\Omega}a_2(x) L^p\chi_{\{|u|^{\alpha -1}> L^p\}}
- \alpha \int_{\Omega}a_2(x) |u|^{\alpha -1}\chi_{\{|u|^{\alpha -1}\leq L^p\}}.
\end{eqnarray*}
Introducing
$$
\psi=u\min{\{|u|^{\frac{\alpha-1}{p}}, L}\}
$$
we observe that:
$$
|\nabla \psi|^p  =L^p |\nabla u|^p \chi_{\{|u|^{\alpha -1}> L^p\}} + \Big(\frac{\beta}{p}\Big)^p |u|^{\alpha -1} |\nabla u|^p \chi_{\{|u|^{\alpha -1}\leq L^p\}}.
$$
Next recalling that $\beta =\alpha +p-1$, and $1<\alpha <\beta$, we can rewrite the above inequality as
\begin{eqnarray}\label{1.4}
\int_{\Omega}|\nabla \psi|^p & \leq & \beta ^{p+1}\bigg\{\Big|\int_{\Omega}\mathcal{A}(x,u,\nabla u)\nabla \phi \Big|+\int_{\Omega} a_1(x)|\psi|^p +\int_{\Omega} a_2(x)|u|^{\alpha -1}\bigg\}.
\end{eqnarray}
Next we estimate the right hand side of (\ref{1.1}). Using the structure condition $(s2)$ we get:
\begin{eqnarray}\label{1.5}
\int_{\Omega}|\mathcal{B}(x,u,\nabla u)\phi | &\leq  & \int_{\Omega}b_1(x)|\nabla u|^{p-1}|u| \min{\{|u|^{\alpha-1}, L^p\}} + \int_{\Omega}b_2(x)| u|^{p}\min{\{|u|^{\alpha-1}, L^p\}}\\
& & \quad\quad \quad + \int_{\Omega}b_3(x)|u|^{\alpha} .\nonumber
\end{eqnarray}
Therefore combining (\ref{1.4}) and (\ref{1.5}), and using (\ref{1.1}) we obtain
\begin{eqnarray}\label{1.6}
\int_{\Omega}|\nabla \psi|^p & \leq &  \beta ^{p+1}\bigg\{ \int_{\Omega}b_1(x)|\nabla u|^{p-1}|u| \min{\{|u|^{\alpha-1},L^p\}}+ \int_{\Omega}(a_1(x)+b_2(x))|\psi |^{p}\\ \nonumber
& &  \quad\quad +\int_{\Omega} a_2(x)|u|^{\alpha -1}+ \int_{\Omega}b_3(x)|u|^{\alpha}\bigg\}.
\end{eqnarray}
Using H\"older and Young's inequalities, and taking note of the fact that $|\psi|^p\leq |u|^{\beta}$  we have
\begin{eqnarray}\label{1.7}
\int_{\Omega}b_1(x)|\nabla u|^{p-1}|u| \min{\{|u|^{\alpha-1},L^p\}} & =& \int_{\Omega}b_1(x)(L|\nabla u|)^{p-1}|Lu| \chi_{\{|u|^{\alpha -1}> L^p\}} \\   \nonumber
& & \quad  +  \int_{\Omega}b_1(x)|\nabla u|^{p-1}|u|^{\frac{(\alpha-1)(p-1)}{p}} |u|^{\frac{\beta}{p}}\chi_{\{|u|^{\alpha -1}\leq L^p\}}\\ \nonumber
& \leq & |b_1|_{\infty}\bigg\{ \varepsilon \Big( \int_{\Omega}|\nabla u|^{p}L^p \chi_{\{|u|^{\alpha -1}> L^p\}} \\ \nonumber
& &  + \int_{\Omega}|\nabla u|^{p}|u|^{(\alpha-1)}\chi_{\{|u|^{\alpha -1}< L^p\}}\Big)+C(\varepsilon)\int_{\Omega}|\psi|^p\bigg\}      \\ \nonumber
& \leq & |b_1|_{\infty}\bigg\{ \varepsilon \int_{\Omega}|\nabla \psi|^p + C(\varepsilon)\int_{\Omega}|u|^{\beta}\bigg\}.\nonumber
\end{eqnarray}
As for the second term on the right of (\ref{1.6}), noting that $q>N$, and again using $|\psi|^p\leq |u|^{\beta}$, we obtain
\begin{eqnarray}\label{1.8}
\int_{\Omega}(a_1(x)+b_2(x))|\psi|^p \, dx& \leq & |a_1+b_2|_{\frac{q}{p}}|\psi|^p_{\frac{pq}{q-p}}\\ \nonumber
& \leq &  \Big(|a_1|_{\frac{q}{p}}+|b_2|_{\frac{q}{p}}\Big)\Big(\tilde{\varepsilon}|\psi|^p_{p^{*}}+C(\tilde{\varepsilon}) |u|^{\beta}_{\beta}\Big).
\end{eqnarray}
Employing  Sobolev inequality in (\ref{1.6}) and choosing $\varepsilon$ and $\tilde{\varepsilon}$ appropriately small in (\ref{1.7}) and (\ref{1.8}) (in reference to (\ref{1.6}) ) we arrive at
\begin{eqnarray}\label{1.9}
\bigg(\int_{\Omega}|\psi |^{p^*}\bigg)^{\frac{p}{p^*}} & \leq & C \beta^{\sigma}\bigg\{ \int_{\Omega}|u|^{\beta}+\int_{\Omega} a_2(x)|u|^{\alpha -1}+ \int_{\Omega}b_3(x)|u|^{\alpha}\bigg\}
\end{eqnarray}
where $\sigma=\sigma (p,q,N)>1$ and $C=C \big( p,q,N, |b_1|_{\infty}, |a_1|_{\frac{q}{p}}, |b_2|_{\frac{q}{p}}\big)$.

As for the last two terms in the right hand side of (\ref{1.9}), we estimate as follows
$$ \int_{\Omega}a_2(x)|u|^{\alpha -1}\leq |a_2|_{\frac{\beta}{p}} \Big(\int_{\Omega}|u|^{\beta}\Big)^{\frac{\alpha -1}{\beta}}\leq ||a_2|| \Big(\int_{\Omega}|u|^{\beta}\Big)^{\frac{\alpha -1}{\beta}},
$$
and
$$ \int_{\Omega}b_3(x)|u|^{\alpha}\leq |b_3|_{\frac{\beta}{p-1}} \Big(\int_{\Omega}|u|^{\beta}\Big)^{\frac{\alpha}{\beta}}\leq ||b_3|| \Big(\int_{\Omega}|u|^{\beta}\Big)^{\frac{\alpha}{\beta}}.
$$
Therefore (\ref{1.9}) yields
\begin{equation*}
\bigg(\int_{\Omega} |u|^{\frac{\beta N}{N-p}} \chi_{\{|u|^{\alpha -1}< L^p\}}\bigg)^{\frac{N-p}{N}} \leq C \beta^{\sigma}\max\bigg\{ \int_{\Omega}|u|^{\beta}, \, \Big(\int_{\Omega}|u|^{\beta}\Big)^{\frac{\alpha -1}{\beta}}\bigg\}
\end{equation*}
where $C=C\big( p,q,N, |b_1|_{\infty}, |a_1|_{\frac{q}{p}}, |b_2|_{\frac{q}{p}}, ||a_2||, ||b_3||\big)$. Letting $L\rightarrow\infty$, we finally obtain
\begin{equation}\label{1.10}
|u|_{\beta\frac{N}{N-p}} \leq  (C\beta)^{\frac{\sigma}{\beta}}\max\Big\{ |u|_{\beta},
\, |u|_{\beta}^{1-\frac{p}{\beta}}\Big\}.
\end{equation}
Iterating this inequality, taking $\beta_0=\beta,\, \, \beta_k=\chi^k\beta\, $, with $\chi=\frac{N}{N-p}$, and keeping in mind that we may assume $C\beta \geq 1$,  we obtain
$$ |u|_{\chi^k\beta}\leq (C\chi\beta)^{\tilde{\sigma}}|u|_{\beta}^{\theta(k)},\quad\quad k\in \mathbb{Z}^{+}
$$
where $\tilde{\sigma}= \frac{\sigma}{\beta}(1+\sum_{m=1}^{\infty} \frac{m}{\chi^m})$ and
$$\theta(k)=\prod_{i=0}^{k-1} R(i),\quad\quad R(i)=1\, \,  \mbox{ or }\, \, R(i)=1-\frac{p}{\chi^i \beta}, \,\, \mbox{ for }\,
\, 0\leq i\leq k-1.
$$
Note that
$$0< \theta_0=\theta_0(p,\beta, N) :=\prod_{i=0}^{\infty}(1-\frac{p}{\chi^i \beta})\leq \theta(k)\leq 1
$$
as $\sum_{i=0}^{\infty} \frac{p}{\chi^i \beta}<\infty$.

Hence
\begin{equation}\label{1.11}
|u|_r\leq C\max\{ |u|_{\beta}^{\theta_0}, \, |u|_{\beta}\},\quad \forall r \mbox{ with } \beta\leq r<\infty,
\end{equation}
and with $C$ as in the statement of the theorem. This readily implies (\ref{1.3}), completing the proof of the theorem.
\end{proof}

Next, we are going to expand and clarify Theorem \ref{T1} by deriving new results based on it.

First, we apply Theorem \ref{T1} to get bounds for sub- and supersolutions of $(P1)$.
\begin{corollary}\label{C201}
Assume the hypotheses of Theorem \ref{T1} and let $u\in D^{1,p}(\Omega)$ be a subsolution of $(P1)$, that is, $u$ satisfies
$$
\int_{\Omega} \mathcal{A}(x,u,\nabla u)\nabla \phi\, dx \leq \int_{\Omega}\mathcal{B}(x,u,\nabla u)\phi \, dx  \quad\forall \phi\geq 0, \, \phi\in D^{1,p}(\Omega).
$$
Then the following bound for $u^+=\max\{u,0\}$ of the subsolution $u$ holds:
\begin{equation}\label{1.12}
\sup u^{+}\leq  C\max\{ |u^{+}|_{\beta}^{\theta_0}, \, |u^{+}|_{\beta}\}.
\end{equation}
\end{corollary}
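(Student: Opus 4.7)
The plan is to run the Moser iteration of Theorem \ref{T1} verbatim, but with a nonnegative test function supported where $u>0$ so that the subsolution inequality can be applied directly. Recalling that truncation preserves membership in $D^{1,p}(\Om)$ with $\na u^+=\na u\,\chi_{\{u>0\}}$ almost everywhere, I would take $\al=\be-(p-1)>1$ and $L>1$ and set
\[
\phi=u^+\min\{(u^+)^{\al-1},L^p\},
\]
which is nonnegative and belongs to $D^{1,p}(\Om)$, hence is admissible in the subsolution inequality stated in the corollary.

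The first key observation is that both $\phi$ and $\na\phi$ vanish almost everywhere on $\{u\le 0\}$, so the tested inequality reduces to an integral over $\{u>0\}$. On this set $u=u^+$, $\na u=\na u^+$, $|u|^p=(u^+)^p$, and $|u|^{p-1}=(u^+)^{p-1}$, so the lower bound coming from (s1) and the upper bound coming from (s2) yield exactly the analogues of (\ref{1.4})--(\ref{1.6}) with $u$ replaced by $u^+$ in the data terms on the right-hand side, and with the auxiliary function $\psi^+=u^+\min\{(u^+)^{(\al-1)/p},L\}$ in place of $\psi$.

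From this point the argument proceeds by transcription: H\"older, Young, and Sobolev inequalities produce the nonlinear reverse H\"older inequality
\[
|u^+|_{\chi\be}\le (C\be)^{\si/\be}\max\bigl\{|u^+|_\be,\,|u^+|_\be^{1-p/\be}\bigr\},
\]
and iterating along $\be_k=\chi^k\be$ gives (\ref{1.11}) with $u^+$ in place of $u$, which is precisely (\ref{1.12}).

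The main obstacle, really the only nonroutine step, is confirming that $u^+$ and the truncated test function $\phi$ genuinely lie in $D^{1,p}(\Om)$ with the claimed a.e.\ identities for their gradients, since $D^{1,p}(\Om)$ is defined as a completion and does not come with pointwise differentiation built in. Once this is settled by standard approximation with $C_c^\infty$ functions and a chain-rule argument for truncations, the remainder of the proof is a mechanical repetition of the corresponding steps in the proof of Theorem \ref{T1} and yields (\ref{1.12}).
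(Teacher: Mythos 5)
Your proposal is correct and follows exactly the paper's route: the paper's own proof consists precisely of taking the test function $\phi=u^{+}\min\{|u^{+}|^{\alpha-1},L^{p}\}$ in the subsolution inequality and observing that the Moser iteration of Theorem \ref{T1} goes through with minimal change. The extra care you take in checking that $\phi$ and $\nabla\phi$ vanish a.e.\ on $\{u\le 0\}$ and that truncations remain in $D^{1,p}(\Omega)$ is a sensible elaboration of the same argument, not a different one.
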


\begin{proof}
In fact, starting with the test function $\phi=u^{+}\min\{|u^{+}|^{\alpha-1}, L^p\}$ the proof of Theorem \ref{T1}
goes through with minimal change, yielding the  global bound (\ref{1.12}).
\end{proof}
Similarly if $u\in D^{1,p}(\Omega)$ is a supersolution of $(P1)$, that is, $u$ satisfies
$$
\int_{\Omega} \mathcal{A}(x,u,\nabla u)\nabla \phi\, dx \geq \int_{\Omega}\mathcal{B}(x,u,\nabla u)\phi \, dx  \quad\forall \phi\geq 0, \, \phi\in D^{1,p}(\Omega),
$$
then we have the following result:
\begin{corollary}\label{C202}
Let $u$ be a supersolution of $(P1)$. Then under the hypotheses of Theorem \ref{T1} the following bound for $u^{-}=\max\{-u,0\}$ holds:
\begin{equation}\label{1.121}
\sup u^{-}\leq  C\max\{ |u^{-}|_{\beta}^{\theta_0}, \, |u^{-}|_{\beta}\}.
\end{equation}
\end{corollary}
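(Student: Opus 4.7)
The plan is to reduce Corollary \ref{C202} to the subsolution estimate Corollary \ref{C201} via the sign-change substitution $v:=-u\in D^{1,p}(\Omega)$. Since $\nabla v=-\nabla u$ and $v^+=\max\{-u,0\}=u^-$, an application of (\ref{1.12}) to $v$ would translate directly into the desired (\ref{1.121}), with $|v^+|_\beta=|u^-|_\beta$ unchanged and all integral data still in the same Lebesgue spaces.

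First I would introduce the transformed Carath\'eodory operators $\tilde{\mathcal{A}}(x,\zeta,\xi):=-\mathcal{A}(x,-\zeta,-\xi)$ and $\tilde{\mathcal{B}}(x,\zeta,\xi):=-\mathcal{B}(x,-\zeta,-\xi)$, and verify that they satisfy $(s1)$--$(s2)$ with exactly the same data functions $a_1,a_2,b_1,b_2,b_3$ (and the same $\lambda$). The key identity is $\tilde{\mathcal{A}}(x,\zeta,\xi)\cdot\xi=\mathcal{A}(x,-\zeta,-\xi)\cdot(-\xi)$; applying $(s1)$ at $(x,-\zeta,-\xi)$ and exploiting the parities $|-\zeta|=|\zeta|$ and $|-\xi|=|\xi|$ yields the coercivity bound for $\tilde{\mathcal{A}}$, and the same parity argument handles the growth bound for $\tilde{\mathcal{B}}$. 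Thus all the Lebesgue-space hypotheses of Theorem \ref{T1} transfer verbatim to the transformed equation.

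Second, I would check that $v$ is a weak subsolution of $-\div\tilde{\mathcal{A}}(x,v,\nabla v)=\tilde{\mathcal{B}}(x,v,\nabla v)$. For any $\phi\in D^{1,p}(\Omega)$ with $\phi\ge 0$, a direct substitution using $u=-v$, $\nabla u=-\nabla v$ gives $\int_\Omega\tilde{\mathcal{A}}(x,v,\nabla v)\nabla\phi\,dx=-\int_\Omega\mathcal{A}(x,u,\nabla u)\nabla\phi\,dx$ and similarly for the right-hand side. Multiplying the supersolution inequality for $u$ by $-1$ therefore reverses its direction and yields precisely the subsolution inequality for $v$. Corollary \ref{C201} applied to $v$ then produces $\sup v^+\le C\max\{|v^+|_\beta^{\theta_0},|v^+|_\beta\}$, which is exactly (\ref{1.121}).

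The main (and really only) obstacle is the sign bookkeeping: condition $(s1)$ is a one-sided coercivity inequality, so one must be careful that the combined substitution $(\zeta,\xi)\mapsto(-\zeta,-\xi)$ and the outer minus sign in the definition of $\tilde{\mathcal{A}}$ preserve, rather than reverse, the direction of the inequality. Once this parity/sign symmetry of the structural conditions is recorded explicitly, the remainder of the argument reduces to a one-line invocation of Corollary \ref{C201}, and no new Moser iteration is required.
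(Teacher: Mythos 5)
Your proposal is correct and follows exactly the paper's own route: pass to $v=-u$, observe that $v$ is a subsolution of the equation with $\tilde{\mathcal{A}}(x,\zeta,\xi)=-\mathcal{A}(x,-\zeta,-\xi)$ and $\tilde{\mathcal{B}}(x,\zeta,\xi)=-\mathcal{B}(x,-\zeta,-\xi)$, check that these inherit $(s1)$--$(s2)$ with the same data by parity, and invoke Corollary \ref{C201}. Your explicit sign bookkeeping for the one-sided inequality $(s1)$ is accurate and merely fills in details the paper leaves implicit.
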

\begin{proof}
This easily follows if one notes that if $u$ is a supersolution of $(P1)$ then $-u$ is a (weak) subsolution of the equation
\begin{equation*}
  -\div \mathcal{\tilde{A}}(x,u,\nabla u)=\mathcal{\tilde{B}}(x,u, \nabla u)
\end{equation*}
where the forms:
$$ \mathcal{\tilde{A}}(x , \zeta, \xi)=- \mathcal{A}(x , -\zeta, -\xi), \quad  \mathcal{\tilde{B}}(x , \zeta, \xi)=- \mathcal{B}(x , -\zeta, -\xi)
$$
clearly satisfy the structure conditions $(s1)-(s2)$ as before.
\end{proof}
In some applications it is helpful to have more precise information on the dependence of the constant $C$, on the right-hand side of the inequality (\ref{1.3}), on the norms $||a_2||$ and $||b_3||$. A more precise formulation can be obtained as follows.
\par\vspace{.1cm}
\noindent Given $t>0$ we define the forms:
$$
 \mathcal{\tilde{A}}(x , \zeta, \xi)=t^{p-1} \mathcal{A}(x , \frac{\zeta}{t}, \frac{\xi}{t}), \quad  \mathcal{\tilde{B}}(x , \zeta, \xi)=t^{p-1} \mathcal{B}(x , \frac{\zeta}{t}, \frac{\xi}{t}).
$$
Note that  $\mathcal{\tilde{A}}$ and $\mathcal{\tilde{B}}$ satisfy the structural conditions $(s1)-(s2)$ with the same data functions with the exception of $a_2$ and $b_3$ being replaced by $\tilde{a}_2(x)=t^pa_2(x)$ and $\tilde{b}_3(x)=t^{p-1}b_3(x)$, respectively. Observe that if $u$ solves $(P1)$ then $v=tu$ is a solution of
\begin{equation*}\label{p2}
(P2)\quad\quad \,  -\div \mathcal{\tilde{A}}(x,v,\nabla v)=\mathcal{\tilde{B}}(x,v, \nabla v)
\end{equation*}
Next we take $t=(\max \{1, ||a_2||^{\frac{1}{p}}, ||b_3||^{\frac{1}{p-1}} \})^{-1}$ and apply Theorem \ref{T1} to the solution $v=tu$ of the equation $(P2)$. Taking into account that now  $||\tilde{a}_2||\leq 1 \,$, $||\tilde{b}_3||\leq 1$ and $t\leq 1$, we get
\begin{eqnarray*}
|v|_{\infty} & \leq & C( p,q,N, |b_1|_{\infty}, |a_1|_{\frac{q}{p}}, |b_2|_{\frac{q}{p}})\max \{  |v|_{\beta}^{\theta_0}, \, |v|_{\beta}\}\\
& \leq &   C( p,q,N, |b_1|_{\infty}, |a_1|_{\frac{q}{p}}, |b_2|_{\frac{q}{p}}) t^{\theta_0}\max \{  |u|_{\beta}^{\theta_0}, \, |u|_{\beta}\}
\end{eqnarray*}
which, since $0<\theta_0 \leq 1$, yields the following refinement of (\ref{1.3}):
\begin{corollary}\label{C203}
Under the hypotheses of Theorem \ref{T1}, the following refined $L^\infty$-estimate holds:
\begin{equation}\label{1.13}
|u|_{\infty} \leq C( p,q,\beta, N,  |b_1|_{\infty}, |a_1|_{\frac{q}{p}}, |b_2|_{ \frac{q}{p}}) \max \{1, ||a_2||^{\frac{1}{p}}, ||b_3||^{\frac{1}{p-1}} \} \max \{ |u|_{\beta},|u|^{\theta_0}_{\beta}\}.
\end{equation}
\end{corollary}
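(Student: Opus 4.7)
The plan is to exploit the scaling invariance already laid out in the paragraph preceding the statement. For a parameter $t>0$ to be chosen, set $v=tu$ and substitute into the weak formulation (\ref{1.1}); a direct calculation shows that $v\in D^{1,p}(\Omega)$ solves the rescaled equation $(P2)$ with $\tilde{\mathcal{A}}(x,\zeta,\xi)=t^{p-1}\mathcal{A}(x,\zeta/t,\xi/t)$ and $\tilde{\mathcal{B}}(x,\zeta,\xi)=t^{p-1}\mathcal{B}(x,\zeta/t,\xi/t)$. A routine check of (s1)--(s2) confirms that $\tilde{\mathcal{A}},\tilde{\mathcal{B}}$ verify the same structure conditions with identical $a_1,b_1,b_2$, while $a_2$ and $b_3$ get replaced by $\tilde{a}_2=t^p a_2$ and $\tilde{b}_3=t^{p-1}b_3$, respectively.

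Next I would choose $t$ to normalize the offending lower-order data, namely
$$
t:=\bigl(\max\{1,\|a_2\|^{1/p},\|b_3\|^{1/(p-1)}\}\bigr)^{-1}\le 1,
$$
so that both $\|\tilde{a}_2\|\le 1$ and $\|\tilde{b}_3\|\le 1$. Applying Theorem \ref{T1} to the rescaled solution $v$ and using the fact that the constant in (\ref{1.3}) depends monotonically on its data-norm arguments (the convention fixed just before Theorem \ref{T1}), one obtains
$$
|v|_\infty\le \tilde C\,\max\{|v|_\beta,|v|_\beta^{\theta_0}\},
$$
where $\tilde C=\tilde C(p,q,\beta,N,|b_1|_\infty,|a_1|_{q/p},|b_2|_{q/p})$ is \emph{independent} of $\|a_2\|$ and $\|b_3\|$. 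This is the essential gain of the rescaling: the rescaled lower-order data are under unit control and so do not enter the constant.

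Finally I would undo the scaling. Since $|v|_\beta=t|u|_\beta$, $|v|_\infty=t|u|_\infty$, and both $t\le 1$ and $\theta_0\le 1$, one has
$$
\max\{|v|_\beta,|v|_\beta^{\theta_0}\}=\max\{t\,|u|_\beta,\,t^{\theta_0}|u|_\beta^{\theta_0}\}\le t^{\theta_0}\max\{|u|_\beta,|u|_\beta^{\theta_0}\},
$$
so that $|u|_\infty\le \tilde C\, t^{\theta_0-1}\max\{|u|_\beta,|u|_\beta^{\theta_0}\}$. Because $t^{-1}\ge 1$ and $1-\theta_0\in[0,1)$, the estimate $t^{\theta_0-1}=(t^{-1})^{1-\theta_0}\le t^{-1}=\max\{1,\|a_2\|^{1/p},\|b_3\|^{1/(p-1)}\}$ then yields the refined bound (\ref{1.13}). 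The argument is essentially bookkeeping; the only subtle point I expect is ensuring that the constant from Theorem \ref{T1} is genuinely monotone in $\|a_2\|$ and $\|b_3\|$, so that replacing these norms by any upper bound (here $1$) is legitimate. This is precisely the convention on the notation $C=C(\cdot,\dots,\cdot)$ fixed at the start of Section \ref{S2}.
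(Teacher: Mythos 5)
Your proposal is correct and follows exactly the paper's own argument: the same rescaling $v=tu$ with $\tilde{\mathcal{A}},\tilde{\mathcal{B}}$, the same normalizing choice of $t$, the application of Theorem \ref{T1} to $v$ with the lower-order data norms absorbed into the constant via the monotonicity convention, and the same bookkeeping $t^{\theta_0-1}\le t^{-1}$ to undo the scaling. No gaps.
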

The following corollary is related with the special structure conditions ($\hat s$1)--($\hat s$2).
\begin{corollary}\label{C204}
Let $u\in X$ be a solution of (P1) satisfying the structure conditions ($\hat s$1)--($\hat s$2) with $p< r < p^*$.
If $a_1$ and $b_2$ in ($\hat s$1)--($\hat s$2) are assumed to be positive constants, then $u$ satisfies the structure conditions (s1)--(s2).
\end{corollary}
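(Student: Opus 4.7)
The plan is to reduce $(\hat s1)$--$(\hat s2)$ to $(s1)$--$(s2)$ by absorbing the factor $|\zeta|^{r-p}$, evaluated at the solution $\zeta=u(x)$, into redefined coefficient functions, exploiting the Sobolev embedding $D^{1,p}(\Omega)\hookrightarrow L^{p^*}(\Omega)$ that is available because $u\in X$. Concretely, I would set
\[
\tilde a_1(x):=a_1\,|u(x)|^{r-p},\qquad \tilde b_2(x):=b_2\,|u(x)|^{r-p},
\]
so that $a_1|u|^r=\tilde a_1(x)|u|^p$ and $b_2|u|^{r-1}=\tilde b_2(x)|u|^{p-1}$ pointwise. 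Substituting $\zeta=u(x)$ in $(\hat s1)$--$(\hat s2)$ then produces exactly the form $(s1)$--$(s2)$ for the solution $u$, with $\tilde a_1,\tilde b_2$ replacing $a_1,b_2$ and the functions $a_2,b_1,b_3$ left unchanged.

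The key verification is then that $\tilde a_1,\tilde b_2\in L^{q/p}(\Omega)$ for some $q>N$, which is the integrability requirement on the coefficients of $|\zeta|^p$ and $|\zeta|^{p-1}$ demanded in Theorem \ref{T1}. Since $|u|^{r-p}\in L^{p^*/(r-p)}(\Omega)$ by Sobolev, I need $q\leq pp^*/(r-p)$ together with $q>N$, so the whole issue reduces to showing $pp^*/(r-p)>N$. This is where the strict inequality $r<p^*$ enters decisively: from $r-p<p^*-p=p^2/(N-p)$ one obtains
\[
\frac{pp^*}{r-p}\;>\;\frac{pp^*(N-p)}{p^2}\;=\;\frac{p^*(N-p)}{p}\;=\;N,
\]
so any $q$ in the nonempty interval $(N,\,pp^*/(r-p)\,]$ works.

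The only real obstacle is precisely this feasibility computation, which hinges on the arithmetic identity $p^*(N-p)/p=N$ (the defining property of the critical Sobolev exponent) together with the hypothesis $r<p^*$; everything else is bookkeeping. Once $q$ is fixed in the admissible range, the conclusion that $u$ satisfies $(s1)$--$(s2)$ in the form required by Theorem \ref{T1} follows immediately, and the standing assumptions of that theorem on $a_2,b_3,b_1$ transfer verbatim since those functions are not modified.
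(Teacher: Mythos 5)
Your proposal is correct and follows essentially the same route as the paper: factor $|u|^{r-p}$ into the coefficients and use $X\hookrightarrow L^{p^*}(\Omega)$ together with the arithmetic $p\,p^*/(r-p)>N$ (equivalently $(r-p)N/p<p^*$) to find an admissible $q>N$. One minor caveat: since $\Omega$ may have infinite measure, $|u|^{r-p}\in L^{p^*/(r-p)}(\Omega)$ does not give membership in $L^{q/p}(\Omega)$ for every $q$ in $(N,\,p\,p^*/(r-p)]$, but only for the endpoint $q=p\,p^*/(r-p)$ --- which is exactly the choice the paper makes and suffices for the conclusion.
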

\begin{proof}
Writing $a_1|u|^r=a_1|u|^{r-p}|u|^p$ and $b_2|u|^{r-1}=b_2|u|^{r-p}|u|^{p-1}$, we only need to show that $|u|^{r-p}\in L^{\frac{q}{p}}(\Omega)$. Since $(r-p)\frac{N}{p} < p^*$ (recall $p<N$), we may choose $q>N$ such that $(r-p)\frac{q}{p} = p^*$, which by recalling $X\hookrightarrow L^{p^*}(\Omega)$ completes the proof.
\end{proof}
\smallskip

\noindent{\bf Special case:} If in addition to the hypotheses of Theorem \ref{T1} we assume $a_1(x)=b_1(x)=b_2(x)=0$, an examination of the proof of the theorem indicates that inequality (\ref{1.9}) now reads:
\begin{equation*}
\Big(\int_{\Omega}|\psi |^{p^*}\Big)^{\frac{p}{p^*}} \leq C \beta^{\sigma}\Big\{ \int_{\Omega} a_2(x)|u|^{\alpha -1}+ \int_{\Omega}b_3(x)|u|^{\alpha}\Big\}
\end{equation*}
which in turn results in the following form of (\ref{1.10})
\begin{equation*}
|u|_{\beta\frac{N}{N-p}} \leq (C\beta)^{\frac{\sigma}{\beta}}\Big(||a_2||+ ||b_3||\Big)^{\frac{1}{\beta}} \max\Big\{ |u|_{\beta}^{1-\frac{p-1}{\beta}},
\, |u|_{\beta}^{1-\frac{p}{\beta}}\Big\}.
\end{equation*}
Iteration of this inequality and repeating the argument leading to Corollary 2.4, we finally get
\begin{equation}\label{1.14}
|u|_{\infty} \leq C( p,\beta, N) \Big(1+ ||a_2||^{\frac{1}{p}}+ ||b_3||^{\frac{1}{p-1}} \Big) \max\Big\{ |u|_{\beta}^{\theta_0},
\, |u|_{\beta}^{\theta_1}\Big\}
\end{equation}
with $\theta_0$ and $\theta_1$ depending only on $p,\beta$ and $ N$, and in particular, $0<\theta_0\leq \theta_1\leq 1-\frac{p-1}{\beta} <1$.
We note that if $\Omega$ has finite measure, by using $|u|_{\beta}^{\theta_0}\leq |u|_{\infty}^{\theta_0}|\Omega|^{\frac{\theta_0}{\beta}}$, and a corresponding inequality for $|u|_{\beta}^{\theta_1}$, and taking $\beta=p^{*}$ on the right hand side of (\ref{1.14}) we obtain an independent proof of the well known apriori estimate (see \cite[Theorem 3.12]{M-Z})
\begin{equation}\label{1.15}
|u|_{\infty} \leq C( p, N, |\Omega|) \Big(1+ ||a_2||^{\frac{1}{p}}+ ||b_3||^{\frac{1}{p-1}} \Big)^{\theta_2}
\end{equation}
for some $\theta_2=\theta_2(p,N)$.

\smallskip

\noindent{\bf Local $L^\infty$-estimate:} For $W_{loc}^{1,p}(\Omega)\cap L_{loc}^{\beta}(\Omega)$ solutions of $(P1)$ it is possible to derive a local version of (\ref{1.3}) if in addition to $(s1)-(s2)$, we further assume the following additional structural condition:
\begin{eqnarray*}
(s3)\qquad\quad \, |\mathcal{A}(x , \zeta, \xi)| & \leq & c_1(x)|\xi|^{p-1} +c_2(x)|\zeta|^{p-1} + c_3(x).
\end{eqnarray*}
In fact we follow the proof of Theorem \ref{T1} starting with the test function
$$
\tilde{\phi}=\eta^{p}\phi =\eta^p u \min{\{|u|^{\alpha-1}, L^p}\},
$$
where $\eta$ is an  appropriately chosen $C^{\infty}_c$ cutoff function and $\phi$ is as in the proof of the theorem. To begin with we may assume that $B_3:=B(x_0,3)\subset \Omega$
with  $\eta =1 $ on $B_{r_1}:=B(x_0,r_1)$ and $\eta=0$ on $B^c(x_0,r_2)$ with $1\leq r_1<r_2 < 3$. Note that $\nabla \tilde{\phi}=\eta^p\nabla \phi +p\eta^{p-1}\nabla \eta \phi$. Now using $(s3)$ to estimate the terms involving the additional term in $\nabla\tilde{\phi}$, that is,  $\eta^{p-1}\nabla \eta \phi$, standard  arguments similar to those used in the proof of Theorem \ref{T1} will result in the basic inequality:
$$
|u|_{\beta\frac{N}{N-p}, B_{r_1}} \leq  \Big(\frac{C\beta}{r_2-r_1}\Big)^{\frac{\sigma}{\beta}}\max\Big\{ \int_{B_{r_2}}|u|^{\beta},
\, \Big(\int_{B_{r_2}}|u|^{\beta}\Big)^{1-\frac{p-1}{\beta}}\Big \}
$$
for some $\sigma=\sigma(p,q,N)$.
Iteration of this inequality will lead to the following local form of our $L^{\infty}$ estimate:
$$ |u|_{\infty , B_1}\leq C \max \{|u|_{\beta, B_2}, |u|_{\beta, B_2}^{\theta_0}\}
$$
with $C$ depending on suitable integral norms of the data in the ball $B_2$. The following general case, then follows by the standard change of variable  $x\rightarrow rx$ (see \cite[Remark 3.2, p. 163]{M-Z}):
\begin{theorem}\label{T2}
Assume the hypothesis of Theorem \ref{T1}. In addition assume the additional structure condition $(s3)$. Suppose $B_{2r}\subset \Omega$, and for some $\beta>p$, the function $u\in W_{loc}^{1,p}(\Omega)\cap L_{loc}^{\beta}(\Omega)$ is a solution of $(P1)$, i.e.
\begin{equation*}
\int_{\Omega} \mathcal{A}(x,u,\nabla u)\nabla \phi\, dx =\int_{\Omega}\mathcal{B}(x,u,\nabla u)\phi \, dx,   \quad\forall \phi\in C^1_c(\Omega).
\end{equation*}
Further  assume $c_1 \in L^{\infty}(\Omega),\,  c_2\in L^{\frac{q}{p-1}}(\Omega)$, and
$ c_3\in L^{\frac{\beta}{p}}(\Omega)\cap L^{\infty}(\Omega)$.
Then there  exists $\theta_0=\theta ( p,\beta,N), $ with $0< \theta_0 \leq 1$, such that
\begin{equation}\label{1.16}
|u|_{\infty, B_{r}} \leq C \max \Big\{ \Big(\fint_{B_{2r}}|u|^{\beta}\Big)^{\frac{1}{\beta}},  \Big( \fint_{B_{2r}}|u|^{\beta}\Big)^{\frac{\theta_0}{\beta}}\Big\}
\end{equation}
where $C=C( p,q,\beta,N, r |b_1|_{\infty}, |c_1|_{\infty}, r^{p\delta_1}|a_1|_{ \frac{q}{p}}, r^{p\delta_1}|b_2|_{\frac{q}{p}},r^{(p-1)\delta_1} |c_2|_{\frac{q}{p-1}}, ||a_2||_r,||b_3||_r, ||c_3||_r)$, with
$$||a_2||_r=r^{p\delta_2}|a_2|_{\frac{\beta}{p}}+r^p|a_2|_{\infty}, \quad ||b_3||_r=r^{(p-1)\delta_2+1}|b_3|_{\frac{\beta}{p-1}}+r^p|b_3|_{\infty},
$$
and
$$
||c_3||_r=r^{(p-1)\delta_2}|c_3|_{\frac{\beta}{p-1}}+r^{p-1}|c_3|_{\infty}
$$
with $\delta_1=1-\frac{N}{q}, \quad \delta_2=1-\frac{N}{\beta}$, and all respective norms taken over $B_{2r}$.
\end{theorem}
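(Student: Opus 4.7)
The plan is to adapt the Moser iteration from the proof of Theorem \ref{T1} to a cutoff setting, and then to rescale. To begin with I would assume $B_3=B(x_0,3)\subset\Omega$ and, for $1\le r_1<r_2<3$, pick a standard cutoff $\eta\in C_c^\infty(B_{r_2})$ with $\eta\equiv 1$ on $B_{r_1}$, $0\le\eta\le 1$, and $|\nabla\eta|\le 2/(r_2-r_1)$. Using $\tilde\phi=\eta^p\,u\min\{|u|^{\alpha-1},L^p\}$ as a test function with $\alpha=\beta-(p-1)$, I would proceed exactly as in (\ref{1.4})--(\ref{1.6}), except that now $\nabla\tilde\phi=\eta^p\nabla\phi+p\eta^{p-1}(\nabla\eta)\phi$ produces an additional boundary-type term $p\int_\Omega \mathcal{A}(x,u,\nabla u)\,\eta^{p-1}(\nabla\eta)\,\phi$. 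This is precisely where $(s3)$ enters: it controls $|\mathcal{A}|\le c_1|\nabla u|^{p-1}+c_2|u|^{p-1}+c_3$, and Young's inequality with a small parameter $\varepsilon$ lets me absorb the resulting $\varepsilon\int\eta^p|\nabla u|^p\min\{|u|^{\alpha-1},L^p\}$ into the analogue of the left-hand side of (\ref{1.4}), while leaving remainders controlled by $(r_2-r_1)^{-p}\int_{B_{r_2}}|\psi|^p$ and data terms built from $c_2,c_3$.

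The main obstacle is that the cutoff injects a factor $(r_2-r_1)^{-1}$ into the reverse H\"older inequality, so the iteration must be arranged on a sequence of shrinking radii in such a way that the accumulated constants still sum. Combining the new gradient estimate with the bounds (\ref{1.7})--(\ref{1.8}) localized to $B_{r_2}$, applying the Sobolev inequality to $\eta\psi\in D^{1,p}(\mathbb{R}^N)$, and letting $L\to\infty$, I would arrive at the localized nonlinear reverse H\"older estimate
\begin{equation*}
|u|_{\chi\beta,B_{r_1}}\le\Big(\frac{C\beta}{r_2-r_1}\Big)^{\sigma/\beta}\max\Big\{|u|_{\beta,B_{r_2}},\,|u|_{\beta,B_{r_2}}^{1-(p-1)/\beta}\Big\},
\end{equation*}
with $\chi=N/(N-p)$, $\sigma=\sigma(p,q,N)$, and $C$ depending on the integral norms of the data over $B_3$. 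I would then iterate along the geometric sequence of radii $\rho_k=1+2^{-k}$ and exponents $\beta_k=\chi^k\beta$ as in the proof of Theorem \ref{T1}: the series $\sum_k \chi^{-k}\log(C\chi^k\beta\cdot 2^{k+1})$ converges and the product of exponents stabilizes to some $\theta_0(p,\beta,N)\in(0,1]$, yielding $|u|_{\infty,B_1}\le C\max\{|u|_{\beta,B_2},|u|_{\beta,B_2}^{\theta_0}\}$ with $C$ depending on suitable data norms over $B_2$.

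Finally, to obtain (\ref{1.16}) for an arbitrary ball $B_{2r}\subset\Omega$, I would rescale via $\tilde u(y)=u(ry)$ on $B_2$, checking that $\tilde u$ weakly solves $-\div\widetilde{\mathcal{A}}(y,\tilde u,\nabla\tilde u)=\widetilde{\mathcal{B}}(y,\tilde u,\nabla\tilde u)$, where $\widetilde{\mathcal{A}},\widetilde{\mathcal{B}}$ satisfy $(s1)$--$(s3)$ with coefficients $a_i(r\cdot),b_j(r\cdot),c_k(r\cdot)$ multiplied by the appropriate power of $r$: in particular $\tilde a_2=r^p a_2(r\cdot)$, $\tilde b_3=r^p b_3(r\cdot)$, $\tilde c_3=r^{p-1}c_3(r\cdot)$, and corresponding factors on the other data. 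Applying the case $r=1$ to $\tilde u$ and then re-expressing each $L^s$-norm of the rescaled data on $B_2$ in terms of the original data on $B_{2r}$ produces exactly the weighted quantities $\|a_2\|_r,\|b_3\|_r,\|c_3\|_r$ and the prefactors $r^{p\delta_1}, r^{(p-1)\delta_1}$ with $\delta_1=1-N/q$, while the identity $|\tilde u|_{\beta,B_2}^\beta=r^{-N}\int_{B_{2r}}|u|^\beta$ turns the right-hand side into the averaged integrals $\fint_{B_{2r}}|u|^\beta$; this is exactly the content of (\ref{1.16}), and the routine bookkeeping of scaling weights is the reference to \cite[Remark 3.2, p.~163]{M-Z}.
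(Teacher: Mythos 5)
Your proposal follows essentially the same route as the paper: the cutoff test function $\tilde\phi=\eta^p u\min\{|u|^{\alpha-1},L^p\}$ on $B_{r_1}\subset B_{r_2}\subset B_3$, the use of $(s3)$ together with Young's inequality to absorb the extra term coming from $p\eta^{p-1}(\nabla\eta)\phi$, the localized nonlinear reverse H\"older inequality with the $(r_2-r_1)^{-1}$ factor, the Moser iteration along shrinking radii and exponents $\beta_k=\chi^k\beta$, and finally the rescaling $x\mapsto rx$ to produce the weighted data norms and the averaged integrals in (\ref{1.16}). The level of detail matches (indeed slightly exceeds) the paper's own sketch, and the bookkeeping of the scaling weights is consistent with the stated constant.
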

\begin{remark}\label{R201}
The sign $\fint_{B_{2r}} $ in (\ref{1.16}) stands for $\frac{1}{|B_{2r}|}\int_{B_{2r}}$.
Finally note that in case $\Omega =\mathbb{R}^N$ one can easily obtain the global boundedness result of Theorem \ref{T1} from this local estimate. But it is important to note that the global estimate (\ref{1.3}) is obtained {\it for arbitrary domains $\Omega$ and without assuming the additional structural condition $(s3)$}.
\end{remark}
\begin{remark}\label{R202}
A $L^\infty$-estimate for a special class of quasilinear elliptic Dirichlet boundary value problem that is related with (\ref{goal-1}) of Section \ref{S1} has been obtained in \cite[Chap. 3]{Drabek-1997} for the following special problem, which is studied in the weighted Sobolev space $X=W^{1,p}_0(\nu, \Omega)$:
\begin{eqnarray*}
-\mathrm{div}(a(x,u)|\nabla u|^{p-2}\nabla u)&=&\mu b(x,u) |u|^{p-2}u+f(\mu, x,u) \ \ \mbox{in }\Omega,\\
u&=&0 \ \ \mbox{on }\partial\Omega,
\end{eqnarray*}
where $\Omega$ is a bounded domain, $\mu$ is some parameter, and $a,\,b,\,f$ are Carath\'eodory functions, which satisfy (taking for simplicity the weight $\nu(x)=1$ )
\begin{eqnarray*}
&& 0< \frac{1}{c_1}\le a(x,s)\le c_1 g(|s|), \quad\mbox{with } g: [0,\infty)\to [1,\infty) \ \mbox{being nondecrasing}\\
&& 0\le b(x,s)\le c_2(x)+c_3|s|^{r-p}\quad\mbox{with } p< r <p^*,\\
&& |f(\lambda, x,s)|\le c(\mu)(\sigma(x)+\varrho(x) |s|^{q-1}\quad\mbox{with } p< q <p^*
\end{eqnarray*}
One readily verifies that the structure condition ($\hat s$1)--($\hat s$2) are verified. In \cite[Lemma 3.14]{Drabek-1997} the following estimate was obtained
$$
|u|_\infty\le \hat c(\|u\|_X)
$$
for any weak solution, where $\hat c: \mathbb{R}^+\to \mathbb{R}^+$ is merely bounded on bounded sets. Even in this special case our general result (\ref{goal-1}) yields a more precise estimate.
\end{remark}


\section{A Quasilinear elliptic equation in $\R^N$: Regularity, Decay, and Existence}\label{S3}

In this section we are going to apply our main result of the preceding section to the following specific quasilinear elliptic problem in $\R^N$:
$$
u\in X=D^{1,p}(\R^N): -\div A(x,\nabla u)= a(x)g(x,u),
$$
where $A: \R^N\times \R^N\to \R^N$ is a Carath\'eodory function, that is, $x\mapsto A(x, \xi)$ is  measurable in $\R^N$ for all $\xi\in \R^N$, and $\xi\mapsto A(x,\xi)$ is continuous for a.e. $x\in \R^N$. The vector function $A$ is assumed to satisfy the following hypotheses:
\begin{itemize}
\item[(A1)] $|A(x,\xi)|\le \lambda |\xi|^{p-1}$;
\item[(A2)] $A(x,\xi)\xi\ge \nu |\xi|^p$;
\item[(A3)] $(A(x,\xi)-A(x,\hat\xi)(\xi-\hat\xi)> 0, \quad\forall \xi,\hat\xi\in \R^N, \ \xi\neq \hat\xi$.
\end{itemize}
Only for the sake of simplifying our presentation in this section, we are going to deal with the prototype $A(x,\xi)=|\xi|^{p-2}\xi$ with $\div \big(|\nabla u|^{p-2}\nabla u\big)=\Delta_p u$ the $p$-Laplacian, and  assume throughout $1 < p< N$. Thus in what follows we consider the elliptic equation
\begin{equation}\label{301}
u\in X=D^{1,p}(\R^N): -\Delta_p u= a(x)g(x,u),
\end{equation}
where the measurable function $a: \R^N\to\R$,  and the Carath\'eodory function  $g: \R^N\times \R\to \R$ are supposed to satisfy the following conditions:
\begin{itemize}
\item[(Ha)] $ |a(x)|\le c_a\frac{1}{1+|x|^{N+\alpha}},$ with some $c_a,\,\alpha >0$;
\item[(Hg)] $|g(x,s)|\le c_g\big(1+|s|^{r-1}\big),$ for a.e. $x\in \R^N$, $\forall s\in\R$ and some $c_g>0$, where $1\le r<p^*$.
\end{itemize}
Clearly $\mathcal{A}(x,\zeta,\xi)=|\xi|^{p-2}\xi$ fulfills the structure condition $(s1)$ with $\lambda=1$, $a_1=a_2=0$.
As for $\mathcal{B}(x,\zeta,\xi)=a(x)g(x,\zeta)$, we are going to show that the structure condition (s2) is satisfied as well.
We first note that the function
\begin{equation}\label{300}
w(x)=\frac{1}{1+|x|^{N+\alpha}}, \quad \alpha >0,
\end{equation}
is readily seen to belong to $L^\sigma(\R^N)$ for $\sigma\in[1,\infty]$. From $\mathcal{B}(x,\zeta,\xi)=a(x)g(x,\zeta)$ and (Hg) we get
$$
|\mathcal{B}(x,\zeta,\xi)|\le |a(x)|c_g+ |a(x)| c_g|\zeta|^{r-p}|\zeta|^{p-1}, \ \mbox{ with }  1\le r<p^*.
$$
Thus if $u\in X$ is a solution of (\ref{301}), then for the specific $\mathcal{B}(x,u,\nabla u)=a(x)g(x,u)$ we have an estimate related to (s2) with $b_1=0$, $b_2(x)=c_g|a(x)||u|^{r-p}$ and $ b_3(x)=c_g |a(x)|$. In order to verify that $\mathcal{B}(x,u,\nabla u)=a(x)g(x,u)$ satisfies the structure condition (s2), we need to show that $b_3\in L^{\frac{\beta}{p-1}}(\R^N)\cap L^\infty(\R^N)$ for some $\beta > p$, and $b_2\in L^{\frac{q}{p}}(\R^N)$ for some $q>N$. Since $w\in L^\sigma(\R^N)$ for $\sigma\in[1,\infty]$, it follows that $b_3\in L^\sigma(\R^N)$ for $\sigma\in[1,\infty]$. Let us verify that $b_2(x)=c_g|a(x)||u|^{r-p}$ belongs to $L^{\frac{q}{p}}(\R^N)$. To this end we only need to verify that $w|u|^{r-p}\in L^{\frac{q}{p}}(\R^N)$. Without loss of generality we may assume that $p< r <p^*$. Since $(r-p)\frac{N}{p}< p^*$ (note: $p<N$), we may choose $q> N$ such that $(r-p)\frac{q}{p}= p^*$. Recalling that $X=D^{1,p}(\R^N)$ is characterized through
$$
D^{1,p}(\R^N)=\{u\in L^{p^*}(\R^N): |\nabla u|\in L^p(\R^N)\},
$$
we obtain by taking into account $w\in L^\sigma(\R^N)$ for $\sigma\in[1,\infty]$  the estimate
$$
|w|u|^{r-p}|_{\frac{q}{p}}\le |w|_{\infty} |u|_{p^*}\le c |w|_{\infty} |u|_{X},
$$
for some positive constant $c$, which finally shows that $\mathcal{B}(x,u,\nabla u)=a(x)g(x,u)$ satisfies the structure condition (s2).

Hence we may apply the global $L^\infty$-estimate for solutions of (\ref{301}) provided by Theorem \ref{T1} with $\beta=p^*$ to get the following result.
\begin{corollary}\label{C301}
If $u\in X=D^{1,p}(\R^N)$ is a solution of (\ref{301}), then $u\in L^\infty(\R^N)$ and satisfies the estimate
\begin{equation}\label{302}
|u|_{\infty}\le C \max \big\{|u|_{p^*}, |u|_{p^*}^{\theta_0}\big\},
\end{equation}
where $\theta_0=\theta_0(p,N)$ with $0<\theta_0\le 1$, and the constant $C$ as in (\ref{1.3}) of Theorem \ref{T1}. Moreover, $u$ is $C^{1,\gamma}_{\mathrm{loc}}(\R^N)$-regular with $\gamma\in (0,1)$.
\end{corollary}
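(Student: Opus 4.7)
The plan is to apply Theorem \ref{T1} directly with the exponent $\beta=p^*$. The material immediately preceding the statement has already verified that $\mathcal{A}(x,\zeta,\xi)=|\xi|^{p-2}\xi$ satisfies (s1) with $\lambda=1$ and $a_1=a_2=0$, and that the right-hand side $\mathcal{B}(x,u,\nabla u)=a(x)g(x,u)$ satisfies (s2) with $b_1=0$, $b_2(x)=c_g|a(x)||u|^{r-p}\in L^{q/p}(\R^N)$ for a suitable $q>N$, and $b_3(x)=c_g|a(x)|$. So the structural hypotheses of Theorem \ref{T1} are already in place.

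It remains to check the integrability hypotheses for the choice $\beta=p^*$. Since $D^{1,p}(\R^N)\hookrightarrow L^{p^*}(\R^N)$, we have $u\in L^{p^*}(\R^N)$, so the requirement $u\in L^\beta(\R^N)$ is satisfied. Because $a_2\equiv 0$, the $L^{\beta/p}\cap L^\infty$-condition on $a_2$ is trivially fulfilled. From (Ha) together with the fact that the weight $w$ in (\ref{300}) lies in $L^\sigma(\R^N)$ for every $\sigma\in[1,\infty]$, we conclude that $b_3=c_g|a|\in L^{p^*/(p-1)}(\R^N)\cap L^\infty(\R^N)$. Theorem \ref{T1} then yields the estimate (\ref{302}) at once, with $\theta_0=\theta_0(p,p^*,N)=\theta_0(p,N)$ depending only on $p$ and $N$ since $p^*$ itself is determined by $p$ and $N$.

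For the local regularity assertion, once $u\in L^\infty(\R^N)$ is in hand, the right-hand side is globally controlled by
$$
|a(x)g(x,u(x))|\le c_a c_g\, w(x)\bigl(1+|u|_\infty^{r-1}\bigr),
$$
so in particular $a(\cdot)g(\cdot,u)\in L^\infty(\R^N)$. Consequently, on any ball $B\subset\R^N$ the function $u$ is a bounded weak solution of $-\Delta_p u=f$ with $f\in L^\infty(B)$, and the classical interior $C^{1,\gamma}$-regularity theory for quasilinear equations of $p$-Laplace type \cite{DiB83,Tolksdorf,Lie88} delivers $u\in C^{1,\gamma}_{\mathrm{loc}}(\R^N)$ for some $\gamma\in(0,1)$.

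There is no substantial obstacle here: the structural verification was carried out in the discussion preceding the statement, the $L^\infty$-bound is a direct specialization of Theorem \ref{T1}, and the $C^{1,\gamma}$-regularity reduces, once boundedness is known, to an invocation of standard local regularity results. The only minor point to keep track of is that the dependence of $\theta_0$ collapses to the pair $(p,N)$ once the distinguished choice $\beta=p^*$ is made.
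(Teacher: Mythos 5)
Your proposal is correct and follows exactly the route the paper takes: the paper's own proof is a one-liner stating that the estimate is an immediate consequence of Theorem \ref{T1} (whose hypotheses were verified, with $\beta=p^*$, in the discussion preceding the corollary) and that the $C^{1,\gamma}_{\mathrm{loc}}$-regularity is due to DiBenedetto. You simply spell out the hypothesis-checking and the boundedness of the right-hand side in more detail, which is fine.
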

\begin{proof}
While the $L^\infty$-estimate is an immediate consequence of Theorem \ref{T1}, the regularity result is due to DiBenedetto, see \cite{DiB83}.
\end{proof}
From Corollary \ref{C301} it follows that the right-hand side of (\ref{301}) is bounded by
$$
|a(x)g(x,u(x))|\le C \frac{1}{1+|x|^{N+\alpha}}=C w(x),
$$
where the constant $C=C(c_a,c_g, |u|_{p^*})$. Let us consider the following quasilinear equation
\begin{equation}\label{303}
v\in X: -\Delta_p v= Cw(x),
\end{equation}
with $w$ given by (\ref{300}).
\begin{lemma}\label{L301}
Equation (\ref{303}) has a unique positive solution $v\in X\cap C(\R^N)$.
\end{lemma}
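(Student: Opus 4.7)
The plan is to recast (\ref{303}) as a minimization problem for a strictly convex energy on $X$, obtain a unique minimizer, and then upgrade its regularity via the results already proved in the paper.

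First I would set up the variational framework. Define
$$
J(v)=\frac{1}{p}\int_{\R^N}|\nabla v|^p\,dx - C\int_{\R^N} w(x)\,v\,dx,\qquad v\in X.
$$
Since $w\in L^\sigma(\R^N)$ for every $\sigma\in[1,\infty]$, in particular $w\in L^{(p^*)'}(\R^N)$, so by H\"older and the Sobolev embedding $X\hookrightarrow L^{p^*}(\R^N)$ the linear term satisfies
$$
\Big|\int_{\R^N} w v\Big|\le |w|_{(p^*)'}|v|_{p^*}\le c\,|w|_{(p^*)'}\|v\|_X,
$$
so $J$ is well-defined and continuous on $X$. From this estimate $J(v)\ge \tfrac{1}{p}\|v\|_X^p - C'\|v\|_X$, so $J$ is coercive. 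It is strictly convex (combining strict convexity of $t\mapsto t^p/p$ on $[0,\infty)$ with linearity of the $w$-term) and weakly lower semicontinuous on $X$. Hence $J$ has a unique minimizer $v\in X$, which is the unique weak solution of (\ref{303}); equivalently, two solutions tested against their difference would yield, by the strict monotonicity of $-\Delta_p$, that the two solutions coincide.

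Next I would establish positivity. Since $|\nabla |v|\,|=|\nabla v|$ a.e.\ and $w\ge 0$, one checks $J(|v|)\le J(v)$, so by uniqueness $v=|v|\ge 0$ a.e.; alternatively, testing the weak form with $v^-=\max\{-v,0\}\in X$ gives $\int|\nabla v^-|^p\le -C\int w\,v^-\le 0$, hence $v^-=0$. To obtain strict positivity, I would invoke the strong maximum principle for the $p$-Laplacian (V\'azquez/Tolksdorf): $v$ is a nonnegative weak supersolution of $-\Delta_p v=0$ with right-hand side $Cw\not\equiv 0$, so $v>0$ on $\R^N$.

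Finally, for the continuity claim I would apply Corollary \ref{C301} to (\ref{303}), read as the prototype equation (\ref{301}) with $a(x)=Cw(x)$ (which satisfies (Ha)) and $g(x,s)\equiv 1$ (which satisfies (Hg) with $r=1$, $c_g=1$). This produces the global $L^\infty$-bound and the $C^{1,\gamma}_{\mathrm{loc}}(\R^N)$-regularity of DiBenedetto, so in particular $v\in C(\R^N)$.

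The main obstacle is essentially bookkeeping: verifying that the lemma really fits the framework already developed (so that Corollary \ref{C301} can be quoted) and handling the strict positivity, since mere nonnegativity from the variational argument does not by itself rule out interior zeros and needs a separate strong-maximum-principle argument.
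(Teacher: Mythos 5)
Your proposal is correct and, apart from packaging, matches the paper's argument. The only real difference is in the existence/uniqueness step: you minimize the strictly convex, coercive energy $J$ by the direct method, whereas the paper invokes surjectivity of the bounded, continuous, strictly monotone, coercive operator $-\Delta_p: X\to X^*$ (the main theorem on monotone operators), using that $w\in L^{(p^*)'}(\R^N)\hookrightarrow X^*$; for this equation the two routes are interchangeable and rest on the same estimates. Your nonnegativity argument (testing with $v^-$) is literally the paper's, and for strict positivity you use the strong maximum principle where the paper cites Harnack's inequality --- again equivalent tools in this setting. Your observation that continuity follows by reading (\ref{303}) as an instance of (\ref{301}) with $a=Cw$ and $g\equiv 1$ and quoting Corollary \ref{C301} is a clean way to justify the regularity claim that the paper states more tersely via DiBenedetto.
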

\begin{proof}
Since $w\in L^{\sigma}(\R^N)$ for all $\sigma\in [1,\infty]$, it belongs, in particular, to $L^{{p^*}'}(\R^N)$, which is continuously embedded into $X^*$. It is well known that the operator $T=-\Delta_p$ defines a bounded, continuous, strictly  monotone (note $1< p <N$),  and coercive operator from $X$ into its dual through
$$
\langle Tv,\varphi\rangle =\int_{\R^N}|\nabla v|^{p-2}\nabla v\,\nabla\varphi\,dx, \quad \forall \varphi\in X,
$$
where $\langle \cdot,\cdot\rangle$ denotes the duality pairing between $X$ and $X^*$.
Thus $T: X\to X^*$ is  bijective, which yields the existence of a unique solution $v$ of (\ref{303}), which is even $C^{1,\gamma}_{\mathrm{loc}}(\R^N)$-regular. Next, we show that $v(x)\ge 0$. As a weak solution $v$ satisfies
$$
\int_{\R^N}|\nabla v|^{p-2}\nabla v\,\nabla\varphi\,dx=\int_{\R^N}Cw\varphi\,dx.
$$
Testing this relation with $\varphi=v^-=\max\{-v,0\}$, we get
$$
0\le \int_{\R^N}|\nabla v|^{p-2}\nabla v\,\nabla v^-\,dx=-\int_{\R^N}|\nabla v^-|^p\,dx\le 0,
$$
which implies that $\|v^-\|_X=0$ and thus $v^-=0$, that is, $v(x)\ge 0$ for all $x\in \R^N$, and by Harnack's inequality it
follows that $v(x)>0$ for all $x\in \R^N$.
\end{proof}
Next we are going to study pointwise estimate of the positive solution $v$ of (\ref{303}). To this end we note that the solution $v$ can be regarded as a solution of the following equation
\begin{equation}\label{304}
v\in W^{1,p}_{\mathrm{loc}}(\R^N)\cap C(\R^N): \ -\Delta_pv=\mu\quad\mbox{ in }\R^N,
\end{equation}
where $\mu$ is the nonnegative Radon measure generated by the positive function $\hat w(x)= C w(x)$ through
\begin{equation}\label{305}
\mu(E)= \int_E\hat w(x) \, dx,\quad E\subset \R^N,\ \ E \mbox{ Lebesque measurable}.
\end{equation}
Therefore, the unique positive solution $v$ is a special case of what is usually referred to as $A$-superharmonic function, see  \cite{HKM93,Kilp94}. By results due to Kilpelainen-Maly (see \cite[Corollary 4.13]{Kilp94}), and in view of $\inf_{x\in \R^N} v(x)=0$
we get the following pointwise estimate.
\begin{lemma}\label{L302}
The unique positive solution $v$ satisfies the following pointwise estimate
\begin{equation}\label{306}
c_1 W_{1,p}^\mu(x, \infty)\le v(x)\le c_2 W_{1,p}^\mu(x, \infty),\quad x\in \R^N,
\end{equation}
where $c_1,c_2$ are positive constants depending on $N,p$, and $W_{1,p}^\mu(x, \infty)= \lim_{R\to\infty} W_{1,p}^\mu(x, R)$ with $W_{1,p}^\mu(x, R)$ being the Wolff potential from nonlinear potential theory defined by
\begin{equation}\label{307}
 W_{1,p}^\mu(x,R)= \int_0^R \Big(\frac{\mu(B(x,t))}{t^{N-p}}\Big)^{\frac{1}{p-1}}\frac{dt}{t}, \quad R>0,\  x\in \R^N.
\end{equation}
\end{lemma}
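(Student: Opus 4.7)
The plan is to reduce the statement to a direct application of the Kilpel\"ainen-Mal\'y two-sided Wolff potential estimate for $\mathcal{A}$-superharmonic functions, \cite[Corollary 4.13]{Kilp94}, taken with the operator $\mathcal{A}(x,\xi)=|\xi|^{p-2}\xi$ associated with the $p$-Laplacian. First I would observe that the positive continuous $D^{1,p}(\R^N)$-solution $v$ produced in Lemma~\ref{L301} of the equation $-\Delta_p v=\hat w$, with nonnegative right-hand side $\hat w(x)=Cw(x)\in L^1(\R^N)\cap L^\infty(\R^N)$, is an $\mathcal{A}$-superharmonic function in the sense of \cite{HKM93} whose associated Riesz measure is precisely $\mu$ as defined in (\ref{305}); in particular $\mu$ is a finite nonnegative Radon measure on $\R^N$.

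Second, the Kilpel\"ainen-Mal\'y theorem applied to $v$ yields, for every $x\in\R^N$ and every $R>0$, the finite-scale two-sided bound
$$
c_1 W_{1,p}^\mu(x,R)\le v(x)\le c_2\bigl(W_{1,p}^\mu(x,2R)+\inf_{B(x,R)} v\bigr),
$$
with constants $c_1,c_2>0$ depending only on $N$ and $p$. The lower estimate of (\ref{306}) then follows at once by letting $R\to\infty$ and using the monotone convergence $W_{1,p}^\mu(x,R)\uparrow W_{1,p}^\mu(x,\infty)$ guaranteed by the definition (\ref{307}).

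Third, for the upper estimate I need $\lim_{R\to\infty}\inf_{B(x,R)} v=0$. The quantity $m(R):=\inf_{B(x,R)} v$ is nonnegative and non-increasing in $R$, and since the balls exhaust $\R^N$ its limit equals $\inf_{y\in\R^N} v(y)$. The latter infimum vanishes: otherwise one would have $v\ge\eta>0$ everywhere, which forces $\int_{\R^N}v^{p^*}=\infty$ and contradicts the membership $v\in D^{1,p}(\R^N)\hookrightarrow L^{p^*}(\R^N)$. Hence $m(R)\to 0$, and passing to the limit $R\to\infty$ in the upper bound gives $v(x)\le c_2 W_{1,p}^\mu(x,\infty)$, as claimed.

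The main conceptual obstacle is the identification in the first step: one must verify that the variational $D^{1,p}$-solution $v$ from Lemma~\ref{L301} coincides with the pointwise-defined $\mathcal{A}$-superharmonic representative governed by the nonlinear potential theory of \cite{HKM93,Kilp94}, and that the distributional identity $-\Delta_p v=\mu$ with $\mu$ given by (\ref{305}) persists in the potential-theoretic framework. With $\hat w$ bounded, integrable, and continuous, and $v$ already known to be continuous, this identification is standard; once in place, the Kilpel\"ainen-Mal\'y estimate and the elementary limit argument above deliver (\ref{306}) with no further work.
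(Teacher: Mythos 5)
Your proposal follows the same route the paper takes: both invoke the Kilpel\"ainen--Mal\'y two-sided Wolff potential estimate \cite[Corollary 4.13]{Kilp94} for the $\mathcal{A}$-superharmonic function $v$ with Riesz measure $\mu$, and both rely on $\inf_{\R^N} v=0$ (which the paper asserts and you correctly justify via $v\in D^{1,p}(\R^N)\hookrightarrow L^{p^*}(\R^N)$) to remove the infimum term in the limit $R\to\infty$. Your write-up is a correct and somewhat more detailed version of the paper's argument.
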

Thus a pointwise estimate of $v$ from above is provided by an estimate from above of the Wolff potential
$W_{1,p}^\mu(x, \infty)$, which has been calculated in \cite{Carl-17, Carl-18}. In particular, by \cite[Lemma 2.1, Theorem 2.2]{Carl-18} we obtain the following result.

\begin{lemma}\label{L303}
The unique positive solution $v\in X \cap C(\R^N)$ satisfies the following pointwise estimate:
\begin{equation}\label{308}
0<v(x)\le c\frac{1}{1+|x|^{\frac{N-p}{p-1}}},\quad x\in \R^N.
\end{equation}
\end{lemma}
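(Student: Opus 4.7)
The plan is to combine the Wolff potential upper bound from Lemma \ref{L302} with a direct computation of $W_{1,p}^\mu(x,\infty)$ for the specific measure $\mu$ with density $\hat w(x)=Cw(x)=C(1+|x|^{N+\alpha})^{-1}$. Positivity $v(x)>0$ is already contained in the conclusion of Lemma \ref{L301} (nonnegativity via testing with $v^-$ together with Harnack's inequality), so only the pointwise decay estimate requires attention.

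First I would dispose of the region of bounded $|x|$ (say $|x|\le 1$) by invoking Corollary \ref{C301}: $v$ is globally bounded, so $v(x)\le |v|_\infty$ in any fixed ball, and this accounts for the additive $1$ in the denominator $1+|x|^{(N-p)/(p-1)}$ of (\ref{308}). Alternatively the lower bound $c_1 W_{1,p}^\mu(x,\infty)\le v(x)$ is irrelevant here, so only the upper inequality in (\ref{306}) is used.

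For large $|x|$ I would split the Wolff integral in (\ref{307}) at the intermediate scale $t_0=|x|/2$ and estimate $\mu(B(x,t))$ separately on the two regimes. On the inner interval $0<t\le t_0$ the ball $B(x,t)$ lies in $\{|y|\ge |x|/2\}$, so $\hat w(y)\le c|x|^{-(N+\alpha)}$ there and hence $\mu(B(x,t))\le ct^N|x|^{-(N+\alpha)}$; inserting this into (\ref{307}) yields $\int_0^{t_0}(ct^p/|x|^{N+\alpha})^{1/(p-1)}\,dt/t$, whose elementary evaluation produces a contribution of order $|x|^{-(N+\alpha-p)/(p-1)}$. On the outer interval $t>t_0$ I would use the crude bound $\mu(B(x,t))\le \mu(\R^N)=\|\hat w\|_{L^1}<\infty$, which is finite because $w\in L^\sigma(\R^N)$ for every $\sigma\in[1,\infty]$; the integrand then becomes comparable to $t^{-(N-p)/(p-1)-1}$, and this tail converges (thanks to $p<N$) to something of order $|x|^{-(N-p)/(p-1)}$. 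Since $\alpha>0$, the outer contribution dominates the inner one, giving $W_{1,p}^\mu(x,\infty)\le c|x|^{-(N-p)/(p-1)}$ for $|x|$ large.

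I do not expect any serious obstacle: the whole computation is essentially the Wolff potential decay already carried out in \cite[Lemma 2.1, Theorem 2.2]{Carl-18}, which one can invoke directly. The only conceptual point worth emphasizing is that the resulting decay rate $(N-p)/(p-1)$ is the natural fundamental-solution rate for the $p$-Laplacian and is independent of $\alpha$: additional decay of the right-hand side beyond $L^1$-integrability cannot improve the pointwise decay of $v$ past this threshold, because the tail of the Wolff potential is controlled only by the total mass of $\mu$. Combining the bounded-$|x|$ and large-$|x|$ regimes then yields the stated estimate (\ref{308}).
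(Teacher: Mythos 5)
Your proposal is correct and follows essentially the same route as the paper: the upper bound in Lemma \ref{L302} reduces the claim to a decay estimate for the Wolff potential $W_{1,p}^\mu(x,\infty)$, which the paper simply imports from \cite[Lemma 2.1, Theorem 2.2]{Carl-18} and which your splitting of the integral at $t_0=|x|/2$ reproduces correctly (the only cosmetic slip is invoking Corollary \ref{C301} for the boundedness of $v$ near the origin; $v$ solves (\ref{303}), not (\ref{301}), but its continuity from Lemma \ref{L301}, or the uniform boundedness of the Wolff potential itself, already covers the region $|x|\le 1$).
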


The decay estimate of the positive solution $v$ will be used to get a similar decay estimate for any solution of the original problem (\ref{301}). We have the following result.
\begin{theorem}\label{T301}
Assume the hypotheses (Ha) and (Hg), and let $u\in X$ be a solution of (\ref{301}). Then $u\in X\cap C^{1,\gamma}_{\mathrm{loc}}(\R^N)$, $\gamma\in (0,1)$, satisfies
\begin{equation}\label{309}
|u(x)|\le C\frac{1}{1+|x|^{\frac{N-p}{p-1}}},\quad x\in \R^N,
\end{equation}
with the constant $C=C(N,p, |u|_{p^*})$.
\end{theorem}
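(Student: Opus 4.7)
The plan is to combine every preparatory result of this section into a two-sided comparison argument against the explicit majorant $v$ built in Lemma \ref{L301}--Lemma \ref{L303}. First I would invoke Corollary \ref{C301} to secure $u\in L^\infty(\R^N)\cap C^{1,\gamma}_{\mathrm{loc}}(\R^N)$, with $|u|_\infty\le C(N,p,|u|_{p^*})$. Combined with the growth assumption (Hg) and the weight estimate (Ha), this yields the pointwise bound
$$
|a(x)g(x,u(x))|\le c_a c_g\bigl(1+|u|_\infty^{r-1}\bigr)\frac{1}{1+|x|^{N+\alpha}}=:\hat C\,w(x),
$$
where $\hat C=\hat C(N,p,c_a,c_g,|u|_{p^*})$ and $w$ is as in (\ref{300}). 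Thus $u$ solves an equation whose right-hand side is majorised in absolute value by $\hat C\,w(x)$.

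Next, I would let $v\in X\cap C(\R^N)$ be the unique positive solution of $-\Delta_p v = \hat C\,w$ furnished by Lemma \ref{L301}, for which Lemma \ref{L303} already supplies the decay bound $0<v(x)\le c(1+|x|^{(N-p)/(p-1)})^{-1}$ with $c=c(N,p,\hat C)=c(N,p,|u|_{p^*})$. The heart of the proof is then a weak comparison argument on the whole space: I claim that $|u(x)|\le v(x)$ for every $x\in\R^N$, so that the desired decay (\ref{309}) follows directly from Lemma \ref{L303}.

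To establish $u\le v$, I would test both equations against $\varphi=(u-v)^+\in X$ (this lies in $D^{1,p}(\R^N)$ since $u,v\in X$ and truncation preserves the Beppo--Levi space), and subtract. The right-hand side becomes
$$
\int_{\R^N}\bigl(a(x)g(x,u)-\hat C w(x)\bigr)(u-v)^+\,dx\le 0,
$$
whereas the left-hand side equals
$$
\int_{\{u>v\}}\bigl(|\nabla u|^{p-2}\nabla u-|\nabla v|^{p-2}\nabla v\bigr)\cdot\nabla(u-v)\,dx,
$$
which is nonnegative by the strict monotonicity of the $p$-Laplacian. Hence both sides vanish, so $\nabla(u-v)^+\equiv 0$; since the only constant in $D^{1,p}(\R^N)\subset L^{p^*}(\R^N)$ is zero, we conclude $(u-v)^+\equiv 0$. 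Running the same argument with $-u$ in place of $u$ (noting that $-\Delta_p(-u)=-a(x)g(x,u)$ is again dominated by $\hat C\,w$) yields $-u\le v$, and therefore $|u|\le v$ pointwise.

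I expect the comparison step to be the only real obstacle. On bounded domains the comparison principle for $-\Delta_p$ is textbook, but here one must check that $(u-v)^+$ is a legitimate test function in the Beppo--Levi space $D^{1,p}(\R^N)$ and, crucially, that the vanishing of $\nabla(u-v)^+$ forces $(u-v)^+=0$ (rather than an arbitrary constant). The embedding $D^{1,p}(\R^N)\hookrightarrow L^{p^*}(\R^N)$, which was already used repeatedly in Section \ref{S3}, supplies exactly this rigidity and closes the argument.
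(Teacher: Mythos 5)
Your proposal is correct and follows essentially the same route as the paper: apply Corollary \ref{C301} to bound the right-hand side by a multiple of $w$, compare $u$ (and then $-u$) with the positive solution $v$ of (\ref{303}) by testing with $(u-v)^+$ and using the strict monotonicity (A3), and conclude via the decay of $v$ from Lemma \ref{L303}. Your explicit justification that $\nabla(u-v)^+\equiv 0$ forces $(u-v)^+\equiv 0$ through the embedding $D^{1,p}(\R^N)\hookrightarrow L^{p^*}(\R^N)$ is a welcome clarification of a point the paper states somewhat tersely.
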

\begin{proof}
By Corollary \ref{C301}, $u\in L^\infty(\R^N)$ and hence the right hand side is bounded with
$$
|a(x)g(x,u(x))|\le C w(x)= \hat w(x)
$$
Since $v$ is the unique positive solution of (\ref{303}) (see Lemma \ref{L301}) we get by comparison
\begin{equation}\label{310}
\langle -\Delta_p u- (-\Delta_pv), \varphi\rangle \le 0,\quad \forall \varphi\in X_+,
\end{equation}
where $X_+=\{\varphi\in X: \varphi\ge 0\}$. Taking in (\ref{310}) the test function $\varphi=(u-v)^+$ we get
\begin{eqnarray*}
0&\ge &\int_{\R^N} \Big(|\nabla u|^{p-2}\nabla u-|\nabla v|^{p-2}\nabla v\Big)\nabla (u-v)^+\,dx\\
&=&\int_{\{u>v\}} \Big(|\nabla u|^{p-2}\nabla u-|\nabla v|^{p-2}\nabla v\Big)(\nabla u-\nabla v)\,dx\ge 0,
\end{eqnarray*}
where $\{u>v\}:=\{x\in \R^N: u(x)> v(x)\}$. Since $A(x,\xi)=|\xi|^{p-2}\xi$ satisfies (A3), we infer that the Lebesgue measure of $\{u>v\}$ is zero,  and thus $(u-v)^+=0$, i.e., $u\le v$. Multiplying (\ref{301}) by $-1$ we get
$$
-(-\Delta_pu)=-a(x) g(x,u)
$$
that is
$$
\int_{R^N}|\nabla (-u)|^{p-2}\nabla (-u)\nabla \varphi\,dx=\int_{R^N}-a(x) g(x,u)\,dx.
$$
Setting $\tilde u=-u$ and making use of the solution $v$ we get by subtracting the corresponding equations
$$
\int_{R^N}\Big(|\nabla \tilde u|^{p-2}\nabla \tilde u-|\nabla v|^{p-2}\nabla v\Big)\nabla \varphi\,dx\le 0,\quad \forall \varphi\in X_+,
$$
which by applying the test function $\varphi=(\tilde u-v)^+$ yields $(\tilde u-v)^+$ and thus $\tilde u\le v$, or equivalently $u\ge -v$. Hence we have obtained $|u|\le v$, which by using the decay property of $v$ according to Lemma \ref{L303} completes the proof.
\end{proof}

\smallskip

Finally, we are going to provide sufficient conditions for the existence of solutions of equation (\ref{301}).

\begin{theorem}\label{T302}
Let $a: \R^N\to\R$ satisfy (Ha) and let $g:\R^N\times \R\to\R$ satisfy either
\begin{itemize}
\item[(Hg1)] $|g(x,s)|\le c_g\big(1+|s|^{q-1}\big), \quad\mbox{with } 1<q<p,$ and $c_g>0$,
\end{itemize}
or
\begin{itemize}
\item[(Hg2)] $|g(x,s)|\le c_g\big(1+|s|^{p-1}\big)$,  with  $c_g>0$ such that $c_ac_g<\frac{1}{\|i_w\|^p}$, where $\|i_w\|$ denotes the norm of the embedding operator $i_w: X\to L^q(\R^N,w)$ with $L^q(\R^N,w)$ being the weighted Lebesgue space.
\end{itemize}
Then (\ref{301}) admits at least one solution.
\end{theorem}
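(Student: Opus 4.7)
The plan is to obtain a weak solution of (\ref{301}) as a global minimizer of the associated energy functional
$$J(u)=\frac{1}{p}\int_{\R^N}|\nabla u|^p\,dx-\int_{\R^N}a(x)G(x,u)\,dx,\qquad G(x,s):=\int_0^s g(x,t)\,dt,$$
on the reflexive Banach space $X=D^{1,p}(\R^N)$. Under either (Hg1) or (Hg2) one has $|G(x,s)|\le c_g\bigl(|s|+|s|^r/r\bigr)$ with $r\in\{q,p\}\subset(1,p^*)$; combined with (Ha), the fact that the weight $w$ from (\ref{300}) lies in $L^\sigma(\R^N)$ for every $\sigma\in[1,\infty]$, and the continuous embedding $X\hookrightarrow L^{p^*}(\R^N)$, a standard Nemytskii argument yields $J\in C^1(X,\R)$, so that critical points of $J$ are precisely weak solutions of (\ref{301}).

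The first core step is coercivity. Under (Hg1), Hölder's inequality together with $w\in L^\sigma(\R^N)$ gives $\int_{\R^N}|a|\,|u|^r\,dx\le C\|u\|_X^r$ for $r\in\{1,q\}$, so that
$$J(u)\ge \frac{1}{p}\|u\|_X^p - C\bigl(\|u\|_X+\|u\|_X^q\bigr),$$
which is coercive because $q<p$. Under (Hg2) the critical power $|u|^p$ appears, and the very definition of $\|i_w\|$ yields
$$J(u)\ge \frac{1}{p}\bigl(1-c_ac_g\|i_w\|^p\bigr)\|u\|_X^p - C\|u\|_X,$$
which is coercive precisely under the smallness hypothesis $c_ac_g\|i_w\|^p<1$. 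The second core step is weak sequential lower semicontinuity. Since the gradient term is convex and strongly continuous, it suffices to show that $u\mapsto \int_{\R^N}a(x)G(x,u)\,dx$ is weakly sequentially continuous. For $u_n\rightharpoonup u$ in $X$ and any $R>0$, the Rellich--Kondrachov theorem gives $u_n\to u$ in $L^r(B_R)$ and a.e.\ on $B_R$ (up to a subsequence), and continuity of the Nemytskii operator generated by $G$ handles the integral over $B_R$. The tail $\int_{|x|>R}|a|\,|G(x,u_n)|\,dx$ is controlled by $C\bigl(\|w\|_{L^{\sigma_0}(|x|>R)}\|u_n\|_{p^*}+\|w\|_{L^{\sigma_1}(|x|>R)}\|u_n\|_{p^*}^r\bigr)$ for suitable $\sigma_0,\sigma_1$, and the fast decay of $w$ forces these tails to vanish as $R\to\infty$ uniformly in $n$.

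Once coercivity and weak lower semicontinuity are established, the direct method in the reflexive space $X$ delivers a minimizer $u\in X$ of $J$, and the $C^1$ character of $J$ forces $J'(u)=0$, i.e.\ $u$ is the desired weak solution. The main obstacle is the weak continuity of the Nemytskii term on the unbounded domain $\R^N$, since loss of mass at infinity generally destroys such compactness; here it is overcome precisely by the fast pointwise decay of $a$ imposed in (Ha), which ensures $w\in L^\sigma(\R^N)$ for every $\sigma\in[1,\infty]$ and thus makes the tails uniformly negligible along any minimizing sequence, while in the borderline situation (Hg2) the smallness condition on $c_ac_g\|i_w\|^p$ is needed to preserve a strictly positive leading coefficient in the coercivity estimate.
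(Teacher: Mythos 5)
Your proof is correct, but it takes a genuinely different route from the paper. You treat (\ref{301}) variationally: you minimize the energy $J(u)=\frac1p\|u\|_X^p-\int_{\R^N}aG(x,u)\,dx$ by the direct method, using coercivity (where your two estimates, including the factor $1-c_ac_g\|i_w\|^p$ under (Hg2), match the paper's coercivity computation exactly) together with weak sequential continuity of the Nemytskij term, which you obtain from Rellich on balls plus uniform tail control coming from the decay of $w$. The paper instead works operator-theoretically: it writes the problem as $-\Delta_p u-aG(u)=0$ in $X^*$, shows $aG=i_a^*\circ G\circ i_w$ is bounded and completely continuous via the compact embedding $X\hookrightarrow\hookrightarrow L^q(\R^N,w)$ of \cite[Lemma 6.1]{C-L-21}, concludes that $-\Delta_p-aG$ is pseudomonotone, and invokes the surjectivity theorem for coercive pseudomonotone operators. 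The underlying analytic ingredients are the same (compactness into weighted Lebesgue spaces, which is exactly your ``local compactness plus vanishing tails,'' and the identical coercivity bounds), but the frameworks differ in what they buy: your variational argument additionally exhibits the solution as a global minimizer of $J$, yet it hinges on the right-hand side $a(x)g(x,u)$ being independent of $\nabla u$ so that a potential $G$ exists, whereas the pseudomonotone-operator approach of the paper extends without change to non-variational perturbations. Two minor points you should make explicit if you write this up in full: the verification that $J\in C^1(X,\R)$ on the unbounded domain requires the same weighted Hölder estimates you use for coercivity (it is routine but not free), and in (Hg2) the relevant embedding norm is that of $i_w:X\to L^p(\R^N,w)$ (the exponent $q$ in the statement of (Hg2) should be read as $p$ there, as the paper's own coercivity check confirms).
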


\begin{proof}
Let us assume hypotheses (Ha) and (Hg1). If $G$ denotes the Nemytskij operator generated by $g$ through $G(u)(x)=g(x,u(x))$, then $G: L^q(\R^N, w)\to L^{q'}(\R^N, w)$ is a bounded and continuous mapping from the weighted Lebesgue space $L^q(\R^N, w)$ with weight function $w$ into its dual space, where $w$ is given by (\ref{300}), and $L^q(\R^N, w)$ is defined by
$$
L^q(\R^N, w)=\Big\{u: \R^N\to \R \mbox{ measurable}: \int_{\R^N}w|u|^q\,dx <\infty\Big\},
$$
which is separable and reflexive under the norm
$$
|u|_{q,w}=\Big(\int_{\R^N}w|u|^q\,dx\Big)^{\frac{1}{q}}.
$$
In \cite[Lemma 6.1]{C-L-21} it is proved that the embedding $X\hookrightarrow\hookrightarrow L^q(\R^N, w)$ is compact. Thus, if $i_w: X\to  L^q(\R^N, w)$ denotes the embedding operator, then the composed operator
$$
G\circ i_w: X\to L^{q'}(\R^N, w) \quad\mbox{ is bounded and completely continuous}.
$$
Further, let us introduce the operator $i_a^*: L^{q'}(\R^N, w)\to X^*$, which is related with the coefficient $a$ and defined by
$$
\eta\in L^{q'}(\R^N, w): \langle i_a^*\eta, \varphi\rangle:=\int_{\R^N}a \eta \varphi\,dx, \quad\forall \varphi\in X.
$$
One readily verifies that $i_a^*: L^{q'}(\R^N, w)\to X^*$ is linear and bounded. The latter follows from the estimate
\begin{eqnarray*}
|\langle i_a^*\eta, \varphi\rangle|&\le & c_a\int_{\R^N}w |\eta| |\varphi|\,dx\le c_a\int_{\R^N}w^{\frac{1}{q'}} |\eta| w^{\frac{1}{q}} |\varphi|\,dx\\
&\le & c_a |\eta|_{q', w}|\varphi|_{q,w}\le C|\eta|_{q', w} \|\varphi\|_X,\quad\forall \varphi\in X.
\end{eqnarray*}
Hence it follows that the operator
$$
aG=i_a^*\circ G\circ i_w: X\to X^* \quad\mbox{is bounded and completely continuous}.
$$
As $-\Delta_p: X\to X^*$ is continuous, bounded and strictly monotone, the operator
$$
-\Delta_p-aG: X\to X^* \quad\mbox{is bounded, continuous and pseudomonotone in the sense of Brezis}.
$$
By the main theorem on pseudomonotone operators (see e.g. \cite[Theorem 27.A]{Zei90}) the operator $-\Delta_p-aG: X\to X^* $ is surjective provided this operator is coercive, which will be verified next  completing  the existence proof.

First, we note that in view of \cite[Lemma 6.1]{C-L-21}, the embedding $X\hookrightarrow\hookrightarrow L^{\sigma}(\R^N, w)$ is compact for all $\sigma$ with $1<\sigma< p^*$. Applying H\"older and Young's inequality we have (note: $1<q<p$)
\begin{eqnarray*}
\langle aG(u),u\rangle|&\le & \int_{\R^N}|a| |g(x,u| |u|\,dx\le c_ac_g \int_{\R^N} \big(w|u|+w(C(\varepsilon)+\varepsilon|u|^p)\big)\,dx\\
&\le& C |w|_{{p^*}'}|u|_{p*} +C(\varepsilon)|w|_1 +\varepsilon |u|_{p,w}^p\le C(\tilde\varepsilon)(1+\|u\|_X)+ \tilde\varepsilon \|u\|_X^p,
\end{eqnarray*}
where $\tilde\varepsilon>0$ can be arbitrarily chosen. Choosing $\tilde\varepsilon=\frac12$ we get
$$
\frac{1}{\|u\|_X}\langle -\Delta_p u-aG(u), u\rangle \ge \frac12\|u\|_X^{p-1}-C(\tilde\varepsilon)\big(1+\frac{1}{\|u\|_X}\big) \to\infty,\ \mbox{ as } \|u\|_X\to \infty,
$$
hence the coercivity, which completes the proof under the assumptions (Ha) and (Hg1).

The existence proof in case the assumption (Hg1) is replaced by (Hg2) follows basically the same line as before, where in this case the smallness of the constant $c_ac_g$ is needed to ensure  the coercivity of $-\Delta_p-aG: X\to X^* $. To be more precise let us check the coercivity. Denote the norm of the embedding operator $i_w: X\to L^p(\R^N, w)$ by $\|i_w\|$, and the norm of the embedding $i: X\to L^{p*}(\R^N)$ by $\|i\|$. Then we get the following estimate
\begin{eqnarray*}
\langle -\Delta_p u-aG(u), u\rangle &\ge& \|u\|_X^p-c_ac_g|w|_{{p^*}'}|u|_{p*}-c_ac_g |u|_{p,w}^p\\
&\ge & \big(1-c_ac_g \|i_w\|^p\big) \|u\|_X^p-\big(c_ac_g|w|_{{p^*}'} \|i\|\big) \|u\|_X.
\end{eqnarray*}
From the last inequality we infer coercivity provided  $ 1-c_ac_g \|i_w\|^p > 0$, i.e., $c_ac_g<\frac{1}{\|i_w\|^p}$, which completes the proof.
\end{proof}
%

\section*{Acknowledgment}

We are very grateful for the reviewer's careful reading of the manuscript and helpful comments to improve its content and readability.


\section*{Declarations}

\noindent{\bf Funding:} No funding was received to assist with the preparation of this manuscript.

\smallskip

\noindent{\bf Conflict of interest:} The authors have no conflicts of interest to declare that are relevant to the content of this article.

\noindent{\bf Data availability:} The authors declare that the data supporting the findings of this study are available within the paper.


\end{document}